\def\?[#1]{\textbf{[#1]}\marginpar{\Large{\textbf{??}}}}
\let\epsilon=\varepsilon 
\newcommand{\RR}{{\mathbb R}}
\newcommand{\NN}{{\mathbb N}}
\newcommand{\bm}{{\rm b}}
\newcommand{\Hb}{H_{\rm b}}
\newtheorem{thm}{Theorem}
\newtheorem{prop}{Proposition}
\newtheorem{defi}[prop]{Definition}
\newtheorem{lem}[prop]{Lemma}
\newtheorem{corr}[prop]{Corollary}
\newtheorem{rem}{Remark}
\numberwithin{equation}{section}
\newtheorem{que}{Question}
\DeclareMathOperator{\Div}{div}
\DeclareMathOperator{\supp}{supp}
\DeclareMathOperator{\tr}{tr}
\newcommand{\rd}{\partial}
\newcommand{\nb}{\nabla}
\newcommand{\nrm}{\@ifstar{\nrmb}{\nrmi}}
\newcommand{\nrmi}[1]{\Vert{#1}\Vert}
\newcommand{\nrmb}[1]{\left\Vert{#1}\right\Vert}
\newcommand{\abs}{\@ifstar{\absb}{\absi}}
\newcommand{\absi}[1]{\vert{#1}\vert}
\newcommand{\absb}[1]{\left\vert{#1}\right\vert}
\newcommand{\brk}{\@ifstar{\brkb}{\brki}}
\newcommand{\brki}[1]{\langle{#1}\rangle}
\newcommand{\brkb}[1]{\left\langle{#1}\right\rangle}
\newcommand{\set}{\@ifstar{\setb}{\seti}}
\newcommand{\seti}[1]{\{#1\}}
\newcommand{\setb}[1]{\left\{ #1\right\}}
\title{Localized initial data for Einstein equations}
\author{Yuchen Mao}
\email{yuchen\_mao@berkeley.edu}
\address{Department of Mathematics, Evans Hall, University of California,
Berkeley, CA 94720, USA}
\author{Zhongkai Tao}
\email{ztao@math.berkeley.edu}
\address{Department of Mathematics, Evans Hall, University of California,
Berkeley, CA 94720, USA}
\begin{document}
\begin{abstract}
We apply a new method with explicit solution operators to construct asymptotically flat initial data sets of the vacuum Einstein equation with new localization properties. Applications include an improvement of the decay rate in Carlotto--Schoen \cite{glue2} to $\mathcal{O}(|x|^{-(d-2)})$ and a construction of nontrivial asymptotically flat initial data supported in a degenerate sector $\{(x',x_d)\in\mathbb{R}^d:|x'|\leq x_d^\alpha\}$ for $\frac{3}{d+1}<\alpha<1$.
\end{abstract}

\maketitle


\section{Introduction}
In this note we provide a simple way to construct asymptotically flat initial data of the Einstein equation with new localization properties. The vacuum Einstein equation reads
\begin{align*}
    Ric_g=0
\end{align*}
where $g$ is a Lorentzian metric and $Ric_g$ is the Ricci curvature. When we restrict to a spacelike hypersurface, we get the Einstein constraint equation 
\begin{align}\label{cons}
\left\{\begin{array}{ll}
    R_g+({\rm tr}_g k)^2-|k|_g^2=0\\
    \Div_g (k-({\rm tr}_g k)g)=0 
\end{array}\right.
\end{align}
It is a system of nonlinear underdetermined PDEs for initial data $(g,k)$ on a spacelike hypersurface. When $k=0$, it specializes to a problem in Riemannian geometry, namely vanishing of scalar curvature. In particular, we are interested in the following question.
\begin{que}
What localization of asymptotically flat solutions to the Einstein constraint equation \eqref{cons} is possible?
\end{que}

This question has surprisingly nontrivial answers. The famous positive mass theorem \cite{schoen1979proof,schoen1981proof,witten1981new} says localization to a compact set is impossible. On the positive direction, Carlotto--Schoen \cite{glue2} gives a gluing construction which gives a localized solution inside a cone. Aretakis--Czimek--Rodnianski \cite{aretakis2021characteristic3,aretakis2021characteristic1,aretakis2021characteristic2} gives an alternative proof of the gluing construction based on their characteristic gluing.

The construction in \cite{glue2} loses the decay rate a little, so they cannot get the ideal $\mathcal{O}(|x|^{2-d})$ decay. Carlotto \cite[Open Problem 3.18]{review} conjectured that we can get this optimal decay. Aretakis--Czimek--Rodnianski \cite{aretakis2021characteristic1} gives an affirmative answer. Here we give an alternative proof. Moreover, we can construct solutions with better localization properties, namely in a degenerate sector $\{(x',x_d)\in\mathbb{R}^d:|x'|\leq x_d^\alpha\}$ for $\frac{3}{d+1}<\alpha<1$. When $\alpha$ is close to $1$, this is still in the range of positive mass theorem (see Carlotto \cite[Appendix B]{review}). We state our main results below.

\subsection*{Main results}
As the first application of our method, we give a simple proof of \cite[Open Problem 3.18]{review}.

\begin{thm}\label{thm1}
Let $d\geq 3, s>\frac{d}{2}, -\frac{d}{2}<\delta<\frac{d}{2}-2$. For $\omega \in \mathbb{S}^{d-1}$ and $0<\theta<\pi/2$, let 
\begin{align*}
    \Omega=\Omega_{\omega,\theta}:=\{x\in \mathbb{R}^d: \angle (x,\omega)\leq\theta\}
\end{align*}
be the cone in $\mathbb{R}^d$ with center vector $\omega$ and angle $\theta$. Then there exists a nontrivial asymptotically flat solution $(g,k)$ of equation \eqref{cons} on $\mathbb{R}^d$ supported in the cone $\Omega$, in the sense that
\begin{align*}
    (g^{ij}-\delta^{ij},k^{ij})\in \Hb^{s,\delta}(\mathbb{R}^d)\times \Hb^{s-1,\delta+1}(\mathbb{R}^d),\quad \supp (g^{ij}-\delta^{ij},k^{ij})\subset \Omega.
\end{align*}
 Moreover, the set of such solutions form a smooth submanifold in a neighbourhood of  $0\in \Hb^{s,\delta}(\Omega)\times \Hb^{s-1,\delta+1}(\Omega)$. 

Moreover, we can make $(g,k)\in C^\infty(\RR^d)$ and the decay rate of the solution can be made
\begin{align}\label{thm1:decay}
    \partial^l(g^{ij}(x)-\delta^{ij})=\mathcal{O}(\langle x\rangle^{2-d-l}), \quad \partial^lk^{ij}(x)=\mathcal{O}(\langle x\rangle^{1-d-l}), \quad l\leq s-d-2.
\end{align}
\end{thm}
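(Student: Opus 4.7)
The plan is to find cone-supported solutions as the zero set of the nonlinear constraint map via an implicit function theorem, using an explicit bounded right inverse for the linearized constraint that preserves support in $\Omega$. Writing $h^{ij}\ceq g^{ij}-\delta^{ij}$ and $\pi^{ij}\ceq k^{ij}$, I would regard \eqref{cons} as $\Phi(h,\pi)=0$ for a smooth map defined on a neighbourhood of $0$ in $\Hb^{s,\delta}\times\Hb^{s-1,\delta+1}$, and expand $\Phi=L+Q$, where schematically $L(h,\pi)\sim(\Delta(\tr h)-\rd_i\rd_j h^{ij},\,\rd_i\pi^{ij})$ is the linearization at $(0,0)$ and $Q$ is a smooth term vanishing quadratically.

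The heart of the proof will be a bounded linear right inverse $R$ of $L$ satisfying $\supp R(F)\subset\Omega$ whenever $\supp F\subset\Omega$. For the divergence constraint $\rd_i\pi^{ij}=F^j$ I would integrate along rays pointing into the cone, e.g.
\[
\pi^{ij}(x)\ceq -\int_0^\infty \tfrac{1}{2}\bigl(F^i(x+t\omega)\omega^j+F^j(x+t\omega)\omega^i\bigr)\,\ud t,
\]
which inherits cone support from $F$ and gains one power of decay. For the linearized scalar-curvature equation I would choose a tensorial ansatz for $h$ (for instance a rank-one piece $h^{ij}=\omega^i\omega^j u$ together with a trace correction) that reduces the second-order equation to a transport-type system amenable to the same kind of directional integration. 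The weight restriction $-d/2<\delta<d/2-2$ should be exactly what makes these ray integrals converge absolutely and what yields a mapping property of the type $R:\Hb^{s-1,\delta+2}\times\Hb^{s-2,\delta+3}\to\Hb^{s,\delta}\times\Hb^{s-1,\delta+1}$.

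With $R$ in hand, I would solve the nonlinear problem by the fixed-point iteration $(h,\pi)\mapsto(h_0,\pi_0)-R\,Q(h,\pi)$ started from a cone-supported element $(h_0,\pi_0)$ in $\ker L$. Quadratic smallness of $Q$ and boundedness of $R$ should make this a contraction on a small ball in $\Hb^{s,\delta}\times\Hb^{s-1,\delta+1}$, producing a smooth submanifold of cone-supported solutions. Cone support is preserved at every iterate because $R$ does. To obtain smoothness and the optimal decay \eqref{thm1:decay}, I would bootstrap in the scale of weighted spaces: the explicit $R$ gains decay, so taking $\delta$ near $d/2-2$ drives the leading behaviour of $h$ to $\brk{x}^{2-d}$ and of $k$ to $\brk{x}^{1-d}$, while elliptic regularity together with Sobolev embedding upgrades the solution to $C^\infty(\RR^d)$ with pointwise decay for each derivative.

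I expect the main obstacle to be the construction of $R$. The standard Newtonian right inverse $\Delta^{-1}$ would immediately destroy cone support, while naive ray integration preserves support but only handles first-order operators. The real work is therefore to engineer the tensorial ansatz that reduces the second-order linearized scalar curvature equation to a transport system solvable by directional antiderivatives, and then to verify in weighted Sobolev spaces that the resulting operator $R$ is bounded with precisely the decay gain needed to land at $\brk{x}^{2-d}$; making this compatible with the nonlinearity, so that $Q$ maps continuously into the source space of $R$, is what will ultimately make the implicit function theorem close.
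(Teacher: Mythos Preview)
Your overall strategy---reformulate as $P(h,\pi)=\Phi(h,\pi)$, build a bounded cone-preserving right inverse for $P$, then contract---matches the paper exactly, and your identification of the construction of $R$ as the crux is correct. But the specific construction you sketch has genuine gaps.

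First, your formula for the symmetric divergence inverse is wrong: a direct computation gives
\[
\partial_i\Bigl(-\tfrac12\!\int_0^\infty\!\bigl(F^i\omega^j+F^j\omega^i\bigr)(x+t\omega)\,dt\Bigr)=\tfrac12 F^j(x)-\tfrac12\,\omega^j\!\int_0^\infty(\partial_iF^i)(x+t\omega)\,dt,
\]
which is not $F^j$ unless $\Div F=0$. The paper instead uses, for each source direction $w$, the ansatz $\pi^{jk}=\partial_l\phi\,(v^jv^lw^k+v^lv^kw^j-w^lv^kv^j)$, reducing to a second-order ODE along the ray $v$; this is the ``engineering'' you anticipate but do not supply. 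Second, and more seriously, integration along a \emph{single} ray produces a convolution kernel that is a line-delta $T_\omega=\omega H(x\!\cdot\!\omega)\delta(\omega^\perp)$; this is not smooth away from $0$ and cannot be bounded $\Hb^{s-1,\delta+1}\to\Hb^{s,\delta}$ for general $s$. The paper's key device is to \emph{average} $T_\omega$ over $\omega\in\supp\chi\subset\Omega\cap\mathbb S^{d-1}$, yielding the homogeneous kernel $\chi(x/|x|)\,x/|x|^d\in C^\infty(\RR^d\setminus\{0\})$; only after this averaging do the Littlewood--Paley estimates in Proposition~\ref{prop:bound_sol} go through. (Your single ray also fails to preserve cone support: a point $x\notin\Omega$ can satisfy $x+t\omega\in\Omega$ for large $t$, whereas after averaging the kernel itself is supported in $\Omega$ and convexity gives $\Omega+\Omega=\Omega$.) Finally, the paper avoids your ``tensorial ansatz for the scalar-curvature part'' entirely by passing to the trace-reversed variable $h_{ij}=g_{ij}-\delta_{ij}-\delta_{ij}\tr_\delta(g-\delta)$, in which the linearization is simply $\partial_i\partial_j h^{ij}$ with no Laplacian term; one then applies the averaged divergence inverse twice.
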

We recall definitions and standard estimates for the $\bm$-Sobolev space $\Hb^{s,\delta}$ in \S \ref{weight}.

We can also prove a similar gluing result as in \cite{glue2} following the same strategy. 
\begin{thm}\label{thm1’}
Let $d\geq 3, s>\frac{d}{2}, -\frac{d}{2}<\delta<\frac{d}{2}-2$. For $y\in\RR^d,\omega \in \mathbb{S}^{d-1}$ and $0<\theta<\pi/2$, let 
\begin{align*}
    \Omega=\Omega_{y,\omega,\theta}:=\{x\in \mathbb{R}^d: \angle (x-y, \omega)\leq\theta\}
\end{align*}
be the cone in $\mathbb{R}^d$ with center at $y$, center vector $\omega$ and angle $\theta$. Let $0<\theta_0<\theta$. Suppose there is a solution $(g_0^{ij}-\delta^{ij},k_0^{ij})\in \Hb^{s,\delta}(\RR^d)\times \Hb^{s-1,\delta+1}(\RR^d)$ solving the equation \eqref{cons} inside $\Omega$, then for $|y|\gg 1$, there exists an asymptotically flat solution $(g,k)$ of equation \eqref{cons} on $\mathbb{R}^d$ such that
\begin{align*}
    (g^{ij}-\delta^{ij},k^{ij})\in \Hb^{s,\delta}(\mathbb{R}^d)\times \Hb^{s-1,\delta+1}(\mathbb{R}^d),\quad (g,k)=
    \left\{\begin{array}{ll}
    (g_0,k_0),  &\Omega_{y,\omega,\theta_0}\setminus B_1(y),   \\
    (\delta,0),     & \mathbb{R}^n\setminus(\Omega_{y,\omega,\theta}\cup B_1(y)).
    \end{array}\right.
\end{align*}
Suppose $(g_0,k_0)$ has decay rate in \eqref{thm1:decay}, then $(g,k)$ also has decay rate in \eqref{thm1:decay}. If $(g_0,k_0)\in C^\infty$, then we also have $(g,k)\in C^\infty$.
\end{thm}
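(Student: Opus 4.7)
My plan is to construct $(g, k)$ by adding a correction $(h, q)$ to a cutoff version of $(g_0, k_0)$ and to invert the linearized constraint operator by the explicit right inverse underlying Theorem \ref{thm1}, the smallness needed for the contraction coming from the hypothesis $|y| \gg 1$. Concretely, I would choose a smooth cutoff $\chi$ that equals $1$ on $\Omega_{y,\omega,\theta_0} \setminus B_1(y)$, equals $0$ on $\mathbb{R}^d \setminus (\Omega_{y,\omega,\theta} \cup B_1(y))$, and satisfies the scale-invariant bound $|\nabla^l \chi(x)| \lesssim (1 + |x-y|)^{-l}$. Setting $\hat g := \delta + \chi(g_0 - \delta)$ and $\hat k := \chi k_0$ automatically matches $(g_0, k_0)$ on $\Omega_{y,\omega,\theta_0} \setminus B_1(y)$ and $(\delta, 0)$ on $\mathbb{R}^d \setminus (\Omega_{y,\omega,\theta} \cup B_1(y))$, so it suffices to produce a correction $(h, q)$ supported in the transition region
\[
T := (\Omega_{y,\omega,\theta} \setminus \Omega_{y,\omega,\theta_0}) \cup B_1(y).
\]
Writing $C$ for the constraint map from \eqref{cons}, the error $\mathcal{E} := C(\hat g, \hat k)$ vanishes wherever $\chi$ is locally constant (where either $(g_0, k_0)$ or $(\delta, 0)$ solves the constraints), so $\supp \mathcal{E} \subset T$; since $T$ lies at Euclidean distance $\gtrsim |y|$ from the origin and $(g_0-\delta, k_0)$ has finite weighted norm, standard commutator estimates give $\|\mathcal{E}\|_{\Hb^{s-2,\delta+2}} = o(1)$ as $|y| \to \infty$.

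The fixed-point step is then to solve $\mathcal{E} + DC|_{(\hat g, \hat k)}(h, q) + N(h, q) = 0$, with $N$ denoting the nonlinear remainder, via Picard iteration. The key analytic input is a bounded right inverse
\[
S_T: \Hb^{s-2,\delta+2}(T) \times \Hb^{s-2,\delta+2}(T) \to \Hb^{s,\delta}(T) \times \Hb^{s-1,\delta+1}(T)
\]
of the linearized constraint operator whose image is supported in $T$. The explicit right inverse constructed in the proof of Theorem \ref{thm1} produces corrections supported in a cone; after translating its vertex to $y$, restricting its angular support to avoid the inner cone $\Omega_{y,\omega,\theta_0}$ should yield the annular part of $S_T$, while a classical interior right inverse on the bounded domain $B_1(y)$ handles the ball piece. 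Once $S_T$ is in hand, the map $(h, q) \mapsto -S_T\bigl(\mathcal{E} + N(h, q)\bigr)$ contracts on a ball of radius $\sim \|\mathcal{E}\|$ in $\Hb^{s,\delta} \times \Hb^{s-1,\delta+1}$ for $|y| \gg 1$, and its fixed point produces the desired $(h, q)$.

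The decay rate \eqref{thm1:decay} and smoothness propagate automatically: $S_T$ has bounded mapping properties at every regularity level, so if $(g_0, k_0) \in C^\infty$ satisfies \eqref{thm1:decay}, then $\mathcal{E}$ inherits the corresponding weighted pointwise decay on $T$ and the Picard iteration preserves it. The main obstacle I foresee is the construction of $S_T$ itself: the transition region $T$ is a cone difference glued to a ball rather than a convex cone, and one must verify that the explicit solution operator from Theorem \ref{thm1} can be localized angularly so that its output avoids $\Omega_{y,\omega,\theta_0}$, and moreover that the resulting $S_T$ has operator norm uniform in $y$, so that the contraction does not degenerate as $|y| \to \infty$.
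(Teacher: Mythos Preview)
Your proposal is correct and follows essentially the same route as the paper: cut off the given data, reduce to a fixed-point problem for a correction supported in the transition region, and solve it using a right inverse $S_T$ built from the conic solution operators of Theorem~\ref{thm:sol} (angularly localized to avoid the inner cone) together with a compact-domain solver near the vertex. The paper adds two implementation details you should be aware of: the ``classical interior right inverse'' on the ball is actually a Bogovskii-type operator for the double and symmetric divergence equations taken from the authors' companion paper \cite{maoohtao2022}, and the $\bm$-Sobolev norms on $\Omega_{\rm int}$ are taken centered at $y$ rather than at the origin, which is exactly what ensures the uniform-in-$y$ operator bound you flagged as a concern.
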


Another natural conjecture that Carlotto made in \cite[Open Problem 3.14]{review} is that whether we can construct solutions localized in a smaller region as long as we do not violate the constraint of the positive mass theorem. We give a partial answer for the case of a degenerate sector.
\begin{thm}\label{thm2}
Let $d\geq 3,\frac{3}{d+1}<\alpha<1$, consider the degenerate sector 
\begin{align*}
    \Omega=\{(x',x_d)\in\mathbb{R}^d: |x'|\leq x_d^\alpha\}.
\end{align*}
If $s\in\NN, , s>\frac{d}{2}+2,\frac{3-(d+3)\alpha}{2}<\delta<\frac{\alpha(d-1)-3}{2}$, then there exists a smooth nontrivial asymptotically flat solution $(g,k)$ of \eqref{cons} supported in $\Omega$, in the sense that
\begin{align*}
    (g^{ij}-\delta^{ij},k^{ij})\in H^{s,\delta}_\alpha(\mathbb{R}^d)\times H^{s-1,\delta+\alpha}_\alpha(\RR^d)\cap C^\infty(\RR^d),\quad \supp (g^{ij}-\delta^{ij},k^{ij})\subset \Omega.
\end{align*}
The decay rate of the solution is given by
\begin{align}\label{eq:deg_decay1}
    \partial_{x'}^{\beta'}\partial_{x_d}^{\beta_d}(g^{ij}-\delta^{ij})=\mathcal{O}(\langle x \rangle^{1-\alpha(d-1)-|\beta'|\alpha-\beta_d}), \quad |\beta|\leq s-d-2
\end{align}
and
\begin{align}\label{eq:deg_decay2}
 \partial_{x'}^{\beta'}\partial_{x_d}^{\beta_d}(k^{ij})=\mathcal{O}(\langle x \rangle^{1-\alpha d-|\beta'|\alpha-\beta_d}), \quad |\beta|\leq s-d-2.
\end{align}
\end{thm}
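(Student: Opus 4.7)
The plan is to follow the same blueprint as for Theorem~\ref{thm1}: linearize the constraint map $\mathcal{C}(g,k)$ at the flat data $(\delta,0)$, construct an explicit right inverse for the linearization whose image is supported in the degenerate sector $\Omega$, and then pass to the nonlinear problem by a fixed-point argument in the anisotropic weighted Sobolev spaces $H^{s,\delta}_\alpha$. Nontriviality of the solution will be ensured by prescribing a suitable nonzero seed supported in $\Omega$.

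The heart of the matter is the construction of the right inverse on the horn-shaped region $\Omega=\{|x'|\le x_d^\alpha\}$. The natural change of variables $y'=x_d^{-\alpha}x'$, $y_d=x_d$ (for $x_d\gtrsim 1$) straightens $\Omega$ into a half-cylinder $\{|y'|\le 1\}$ and converts the linearized constraint system into a first-order operator on this model domain. Under this map one transverse derivative $\partial_{x'}$ produces a factor $x_d^{-\alpha}$ while the longitudinal derivative $\partial_{x_d}$ produces $x_d^{-1}$; this is exactly the anisotropy built into the spaces $H^{s,\delta}_\alpha$ and into the decay rates \eqref{eq:deg_decay1}--\eqref{eq:deg_decay2}. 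On the straightened domain one can adapt the cone-supported solution operator from the proof of Theorem~\ref{thm1}; transferring it back should yield a bounded right inverse from the appropriate anisotropic target space onto $H^{s,\delta}_\alpha\times H^{s-1,\delta+\alpha}_\alpha$, with output supported in $\Omega$.

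With the right inverse $T$ in hand, a contraction mapping argument applied to $(u,v)\mapsto -T\mathcal{N}(\delta+u,v)$, where $\mathcal{N}$ collects the quadratic and higher terms of $\mathcal{C}$, produces the solution. The hypothesis $s>d/2+2$ supplies the Moser-type product estimates required to control $\mathcal{N}$ in the anisotropic scale. The two bounds on $\delta$ are in turn those needed so that $T$ is bounded (upper bound, of Hardy type, reflecting the Fredholm weights of the linearized operator) and so that the nonlinearity gains a positive power of the weight under $T$ (lower bound). The compatibility $\frac{3-(d+3)\alpha}{2}<\frac{\alpha(d-1)-3}{2}$ of these two bounds is equivalent to $\alpha>\frac{3}{d+1}$, which explains the numerology in the hypothesis.

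The main obstacle will be the mapping properties of $T$ on the anisotropic spaces. The straightening coordinates are singular at $\{x_d=0\}$ and only asymptotic at infinity, so the construction must be combined with a Euclidean piece on a bounded region and the weighted estimates tracked uniformly across the gluing; in particular the scale-dependent weights must interact correctly with the inversion of the linearized constraints on the model cylinder. Once this is in place, nontriviality of the resulting $(g,k)$ follows from a careful choice of seed, for instance one whose linearized moments in $\Omega$ cannot be cancelled by the quadratic correction, and the smoothness assertion follows by running the argument at arbitrarily high $s$ and applying local elliptic regularity.
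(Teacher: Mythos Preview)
Your high-level outline (linearize, build a right inverse with support in $\Omega$, run a contraction in $H^{s,\delta}_\alpha$) matches the paper, and your reading of the numerology for $\alpha$ and $\delta$ is correct. The gap is in the one step you flag as the ``main obstacle'': the proposed straightening $y'=x_d^{-\alpha}x'$, $y_d=x_d$ does not reduce you to anything on which the cone operator of Theorem~\ref{thm1} acts. Under that change of variables the constant-coefficient operators $\partial_i\partial_j h^{ij}$ and $\partial_i\pi^{ij}$ become genuinely variable-coefficient (the Jacobian introduces $y_d^{-\alpha}$ and $-\alpha y_d^{-1}y'\!\cdot\!\partial_{y'}$ terms), and the image domain is a half-cylinder, not a cone. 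The solution operator of \S\ref{2} is built from straight rays through the origin and relies on both constant coefficients and conic geometry; neither survives, and ``adapt'' hides the entire construction. Nothing in your sketch says how to invert the transformed operator on the cylinder, so as written the right inverse $T$ is not actually produced.

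The paper avoids the straightening altogether and builds the solution operator directly in the original coordinates. The point is that the divergence equation has a fundamental solution supported on \emph{any} half-curve $\gamma$ from $y$ to infinity, namely $\langle\delta_\gamma,\varphi\rangle=\int_0^\infty\varphi(\gamma(t))\gamma'(t)\,dt$. Choosing curves that stay inside $\Omega$, e.g.\ $\gamma^{(2)}_{y,\omega}(t)=(y'+\omega((1+t)^\alpha-1),\,y_d+t)$, and averaging over $\omega\in\RR^{d-1}$ against a bump $\chi_1$ gives an explicit kernel supported in $\Omega$ with the correct anisotropic homogeneity; a short-range piece built from $\gamma^{(1)}_{y,\omega}(t)=y+(\omega y_d^\alpha,y_d)t$ is cut off and the remainder fed into the long-range kernel. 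The mapping $H^{s-1,\delta+1}_\alpha\to H^{s,\delta}_\alpha$ is then checked by an anisotropic rescaling on dyadic shells plus a Schur-type tail estimate. For the symmetric divergence equation $\partial_i\pi^{ij}=g^j$ one needs an extra idea: following Reshetnyak, write $\partial_i\varphi_j=\eta_{ij}-\zeta_{ij}$ with $\zeta$ the symmetric part, derive $\partial_i\eta_{jk}=\partial_k\zeta_{ij}-\partial_j\zeta_{ik}$, and integrate twice along the same curves to recover $\varphi_k(y)$ from $\zeta$; averaging gives an operator $\tilde L:H^{s-1,\delta+2-\alpha}_\alpha\to H^{s,\delta}_\alpha$. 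With $(\tilde S,\tilde L)$ in hand the fixed-point step is exactly as you describe. Two smaller points: smoothness is obtained in the paper by bootstrapping the relation $(h,\pi)=(\tilde S,\tilde L)\Phi(h_0+h,\pi_0+\pi)$ at \emph{fixed} $s$, not by rerunning at higher $s$; and ``local elliptic regularity'' is not available here since the constraint system is underdetermined.
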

The anisotropic Sobolev space $H^{s,\delta}_\alpha$ captures the anisotropic behavior \eqref{eq:deg_decay1}\eqref{eq:deg_decay2} and is defined in section \ref{sec:Omega}. Note there is a natural constraint of the range of $\alpha$ we can get. When $\alpha=0$, the localization is impossible due to positive mass theorem. When $\alpha=1$, this reduces to Theorem \ref{thm1}. We restricted to the $k=0$ in an earlier version of this paper but Philip Isett pointed to us the paper \cite{reshetnyak1970estimates} by Reshetnyak which can be used to construct nice solution operators for the symmetric divergence equation. We will give a finer description of this method in a joint paper with Sung-Jin Oh and Philip Isett \cite{div_eq2023}.

The gluing technique in studying \eqref{cons} appeared much earlier in Corvino \cite{glue0} and Corvino--Schoen \cite{glue1}, and was generalized by Chru{\'s}ciel--Delay \cite{chrusciel2003mapping}. It has been developed to give a version of our Theorem \ref{thm1} in Carlotto--Schoen \cite{glue2}. Aretakis--Czimek--Rodnianski \cite{aretakis2021characteristic3,aretakis2021characteristic1,aretakis2021characteristic2} introduced and studied the characteristic gluing problem. Chru\'sciel \cite{chrusciel2019anti} and Carlotto \cite{review} give reviews of current situation and some open problems. 

\subsection*{Main ideas}
Our construction is different from that of \cite{glue2}.
The key to our construction is a solution operator of the linearized equation with good support properties following Oh--Tataru \cite[Section 4]{yangmills}.
The linearized equation at the trivial metric $\delta_{ij}$ (under a change of variables) is
\begin{align}\label{lin}
    \left\{\begin{array}{ll}
    \partial_i\partial_j h^{ij}=0\\
    \partial_i \pi^{ij}=0 
\end{array}\right.
\end{align}
The basic idea behind the proofs of Theorems \ref{thm1} and \ref{thm1’} is to construct a fundamental solution of \eqref{lin} generalizing the fundamental solution for $\partial_j v^j=0$ in \cite{yangmills}. Then we use Picard iteration in appropriate Sobolev spaces to get the solution of the nonlinear equation \eqref{cons}. In the case of the degenerate sector (Theorem \ref{thm2}), we develop a new fundamental solution and introduce anisotropic Sobolev spaces adapted to the degenerate sector. The sharp decay rate is obtained by representing the solution as the solution operator applied to nonlinearity.

\subsection*{Organization of the paper}
In section \ref{2} we give the construction of the solution operator adapted to a cone. In section \ref{weight} we recall several estimates for the $\bm$-Sobolev spaces. In section \ref{4} we prove Theorem \ref{thm1} and \ref{thm1’} using our solution operator. In section \ref{sec:Omega} we adapt our method to the degenerate sector to give a proof of Theorem \ref{thm2}.

\subsection*{Acknowledgement}
We would like to thank Sung-Jin Oh for suggesting the idea of this simple solution operator and for his numerous help throughout this program. We would like to thank Philip Isett for pointing to us the paper \cite{reshetnyak1970estimates}.  We would also like to thank Peter Hintz and Ethan Sussman for helpful discussions about the $\bm$-Sobolev spaces.

\section{Construction of the solution operator for the linearized equation}\label{2}
The crux of our argument is an explicit solution operator. In this section we will show how to construct a solution operator $S:C_0^\infty(\mathbb{R}^d)\to C^\infty(\mathbb{R}^d)$ ($d\geq 3$) such that
\begin{align*}
    \supp f\subset \text{a cone}\Longrightarrow \supp Sf\subset \text{a cone}.
\end{align*}

Unlike in Corvino \cite{glue0}, $S$ does not have cokernel (on appropriate weighted Sobolev spaces) since the support is noncompact. The integration kernel of $S$ will have an appropriate decay property.

\subsection{Linearized problem}

We begin by reformulating \eqref{cons} in a sufficiently flat region. We introduce new variables $(h, \pi)$, defined as follows:
\begin{align}
(h_{ij}, \pi_{ij}) &= (g_{ij} - \delta_{ij} - \delta_{ij} \tr_{\delta} (g - \delta), k_{ij} - \delta_{ij} \tr_{\delta} k) \label{eq:hpi-def}
\end{align}
Observe that the transformation is obviously invertible with the formulae
\begin{align}
(g_{ij}, k_{ij}) &= (\delta_{ij} + h_{ij} - \frac{1}{d-1} \delta_{ij} \tr_{\delta} h, \pi_{ij} - \frac{1}{d-1} \delta_{ij} \tr_{\delta} \pi). \label{eq:hpi2gk}
\end{align}
With respect to the new variables, the left-hand sides of \eqref{cons} may be written as
\begin{align} 
	R[g] &= \rd_{i} \rd_{j} h^{ij} - M_{h}^{(2)}(h, \rd^{2} h) - M_{h}^{(1)}(\rd h, \rd h), \\
	(\tr_{g} k)^{2} - \abs{k}_{g}^{2} &= - M_{h}^{(0)}(\pi, \pi), \\
	g^{jj'} (g^{ii'}\nb_{g; i} k_{i'j'} - \rd_{j'} \tr_{g} k )&= \rd_{i} \pi^{ij} - N_{h}^{(1) j}(h, \rd \pi) - N_{h}^{(0) j}(\rd h, \pi),
\end{align}
where each of $M^{(n)}_{h}(u, v)$ or $N^{(n) j}_{h}(u, v)$ is a linear combination of contraction of $u$ and $v$ with a smooth tensor field (of the appropriate order) on $\RR^{d}$ that depends only on $h$. 

In conclusion, \eqref{cons} takes the form
\begin{align} 
	\rd_{i} \rd_{j} h^{ij} &= M_{h}^{(2)}(h, \rd^{2} h) + M_{h}^{(1)}(\rd h, \rd h) + M_{h}^{(0)}(\pi, \pi), \label{eq:constraint-h} \\
	\rd_{i} \pi^{ij} &= N_{h}^{(1) j}(h, \rd \pi) + N_{h}^{(0) j}(\rd h, \pi). \label{eq:constraint-pi}
\end{align}
The right hand side is viewed as the nonlinearity and its precise form will not matter in this paper. In the following, we study how to solve the linearized equations given by the left hand side of \eqref{eq:constraint-h}\eqref{eq:constraint-pi}, which we call double divergence equation and symmetric divergence equation, respectively.
\subsection{Solution operator for the divergence equations}
The construction of the following solution operators is the main point for Theorem \ref{thm1} and \ref{thm1’}.
\begin{thm}\label{thm:sol}
For any $w\in \mathbb{S}^{d-1}$ and $\chi\in C^\infty(\mathbb{S}^{d-1})$ with $\int_{\mathbb{S}^{d-1}}\chi=1$ and the cone $\overline{\{x\in\mathbb{R}^d: \frac{x}{|x|}\in\supp\chi\}}$ is convex, then there exists $K_\chi(x),L_{\chi,w}\in \mathcal{D}'(\mathbb{R}^d)$ such that
\begin{align*}
    \left\{\begin{array}{ll}
    \partial_i\partial_j K_\chi^{ij}=\delta\\
    \partial_iL_{\chi,w}^{ij}=\delta w^j.
\end{array}\right.
\end{align*}
Moreover, they satisfy the following properties
\begin{itemize}
    \item $K_\chi$ and $L_{\chi,\omega}$ are symmetric $2$-tensors;
    \item The support of $K_\chi,L_{\chi,\omega}$ lie inside the cone
    $\overline{\{x\in\mathbb{R}^d: \frac{x}{|x|}\in\supp\chi\}}$;
    \item $K_\chi$ is homogeneous of degree $2-d$, $L_{\chi,\omega}$ is homogeneous of degree $1-d$.
    \item $K_\chi,L_{\chi,\omega}$ are smooth in $\mathbb{R}^d\setminus\{0\}$.
\end{itemize}
\end{thm}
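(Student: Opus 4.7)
The plan is to exhibit explicit homogeneous distributions with the required divergence identities and to verify the remaining three properties by inspection from the formula. For $K_\chi$, I would try the natural ansatz
\[
K^{ij}_\chi(x) \ceq \chi(x/|x|)\,\frac{x^i x^j}{|x|^d},
\]
which is manifestly symmetric in $(i,j)$, supported in the closed cone $\overline{\{x : x/|x|\in\supp\chi\}}$, smooth on $\RR^d\setminus\{0\}$, and homogeneous of degree $2-d$. The only substantive claim is $\partial_i\partial_j K^{ij}_\chi = \delta_0$. Using Euler's identity $x^j\partial_j\chi(x/|x|)=0$ for the degree-$0$ function $\chi(x/|x|)$ together with $\partial_j(x^j|x|^{-d})=0$ on $\RR^d\setminus\{0\}$, a short product-rule calculation yields $\partial_j K^{ij}_\chi(x) = \chi(x/|x|)\,x^i/|x|^d \eqc L^i_\chi(x)$ away from the origin, and an analogous calculation shows $\partial_i L^i_\chi=0$ off the origin as well. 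Thus $\partial_i\partial_j K^{ij}_\chi$ is a homogeneous distribution of degree $-d$ supported at $\{0\}$, hence a scalar multiple of $\delta_0$; the constant is pinned down by the divergence theorem on $B_1$:
\[
\int_{B_1}\partial_i\partial_j K^{ij}_\chi\,dx = \int_{S^{d-1}} L^i_\chi(\omega)\,\omega^i\,d\sigma(\omega) = \int_{S^{d-1}}\chi(\omega)\,d\sigma(\omega) = 1.
\]

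For $L_{\chi,w}$, I would start from the non-symmetric tensor $A^{ij}(x) \ceq w^i L^j_\chi(x)$, which by the above satisfies $\partial_i A^{ij}=\delta_0\,w^j$. Symmetrizing as $\tfrac{1}{2}(A^{ij}+A^{ji})$ introduces an error in the divergence equal to $\tfrac{1}{2}\bigl[(w\cdot\nabla)L^j_\chi-\delta_0\,w^j\bigr]$, a cone-supported, homogeneous degree-$(-d)$ distribution. I would then add a symmetric, cone-supported, degree-$(1-d)$ homogeneous correction $C^{ij}$ whose divergence cancels this error. Concretely, one may try a finite-parameter ansatz
\[
C^{ij}(x) = \chi(x/|x|)\,|x|^{-d}\Bigl[\alpha(x^i w^j+x^j w^i) + \beta\,\delta^{ij}(x\cdot w) + \gamma\,\tfrac{x^i x^j (x\cdot w)}{|x|^2}\Bigr],
\]
and determine $\alpha,\beta,\gamma$ by requiring $\partial_i C^{ij}$ to vanish off the origin (an algebraic constraint after expansion via the product rule) and to match the correct residue at $0$ (one more algebraic constraint via Stokes on $B_1$). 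A cleaner alternative is to import the symmetric-divergence solver of Reshetnyak \cite{reshetnyak1970estimates} with a cone-adapted Oh--Tataru-style kernel, as flagged in the acknowledgment.

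The principal obstacle is this symmetrization step. Unlike the scalar case, the $\chi$-derivative contributions in the product rule do not automatically cancel by Euler's identity, and enforcing symmetry of the ansatz consumes some of the freedom available. What rescues the construction is the underdeterminedness of the system: $\partial_i L^{ij}=f^j$ is $d$ equations for $d(d+1)/2$ components of $L$, leaving enough slack inside the space of symmetric, cone-supported, degree-$(1-d)$ homogeneous $2$-tensors to absorb the symmetrization error while preserving symmetry, cone support, homogeneity, and smoothness off the origin.
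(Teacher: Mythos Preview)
Your $K_\chi$ is correct and in fact cleaner than the paper's choice. The paper builds $K^{ij}_\chi$ as the convolution $\tilde K^i_\chi*\tilde K^j_\chi$ with $\tilde K^i_\chi(x)=\chi(x/|x|)\,x^i|x|^{-d}$; this is where the convexity hypothesis on the cone is actually used (the support of a convolution is the Minkowski sum of the supports). Your direct formula $K^{ij}_\chi=\chi(x/|x|)\,x^ix^j|x|^{-d}$ bypasses the convolution and hence the convexity assumption for this half of the theorem.

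The gap is in $L_{\chi,w}$: your three-parameter ansatz cannot close. Matching $\partial_i C^{ij}$ to the symmetrization error off the origin forces $\beta=0$ (the $\beta$-term alone generates a $(\partial_j\chi)(x\cdot w)|x|^{-d}$ contribution with no counterpart anywhere) and then $\alpha=-\tfrac12$. With these values $S^{ij}+C^{ij}=\gamma\,\chi(x/|x|)\,x^ix^j(x\cdot w)|x|^{-d-2}$, whose distributional divergence is $\gamma\,e^j\delta_0$ with $e^j=\int_{S^{d-1}}\chi(\omega)\,\omega^j(\omega\cdot w)\,d\sigma$. For $\chi$ concentrated near a direction $v$ not parallel to $w$ one has $e\approx (v\cdot w)\,v\not\parallel w$, so no choice of $\gamma$ yields $w^j\delta_0$. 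The structural defect is that every term in your ansatz carries $\chi$ as a scalar prefactor, whereas the correction you need genuinely involves $\nabla\chi$.

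The paper avoids symmetrization altogether via an algebraic identity: for fixed $v\in S^{d-1}$ the tensor $\pi^{jk}=\partial_l\phi\,(v^jv^lw^k+v^lv^kw^j-w^lv^jv^k)$ is already symmetric in $(j,k)$ and satisfies $\partial_j\pi^{jk}=(v\cdot\nabla)^2\phi\,w^k$, so one only needs a ray-supported $\phi$ with $(v\cdot\nabla)^2\phi=\delta_0$. Averaging in $v$ against $\chi$ gives
\[
L^{jk}_{\chi,w}=\partial_l\!\left(\chi\Bigl(\tfrac{x}{|x|}\Bigr)\,\frac{x^jx^lw^k+x^lx^kw^j-w^lx^jx^k}{|x|^{d}}\right)=L^j_\chi w^k+L^k_\chi w^j-(w\cdot\nabla)K^{jk}_\chi,
\]
the last equality using your own $K^{jk}_\chi$. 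The term $-(w\cdot\nabla)K^{jk}_\chi$ is precisely the correction you were seeking; expanding it produces the unavoidable $-(w\cdot\nabla\chi)\,x^jx^k|x|^{-d}$ piece that lies outside your ansatz. Adding this single monomial to your family would let your route succeed; invoking the Reshetnyak machinery is unnecessary here.
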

\begin{proof}
\noindent\textbf{Step 1:}
We first consider a the case of the divergence equation
\begin{align*}
    \partial_i h^i=f.
\end{align*}
Let $\omega\in \mathbb{S}^{d-1}$, $H$ be the Heaviside function, then
\begin{align*}
    T_{\omega}=\omega H(x\cdot \omega)\delta(
    \omega^\perp)
\end{align*}
is a fundamental solution for the divergence equation.
From this we can construct a smoother fundamental solution by averaging in $\omega$. 
Indeed, Let $\chi\in C^\infty(\mathbb{S}^{d-1})$ with $\int_{\mathbb{S}^{d-1}}\chi(\omega)d\omega=1$, then
\begin{align*}
    \langle T_\omega, f\rangle=\int_0^\infty \omega f(t\omega)dt
\end{align*}
and
\begin{align*}
     \left\langle \int_{\mathbb{S}^{d-1}} \chi(\omega)T_\omega d\omega, f\right\rangle&=\int_0^\infty \int_{\mathbb{S}^{d-1}}\omega f(t\omega)\chi(\omega)d\omega dt\\
     &=\int_{\mathbb{R}^d} f(x) \chi\left(\frac{x}{|x|}\right)\frac{x}{|x|^d} dx.
\end{align*}
Thus we have a fundamental solution 
\begin{align}\label{div fund}
    \tilde{K}_{\chi}(x)=\int_{\mathbb{S}^{d-1}} \chi(\omega) T_\omega d\omega=\chi\left(\frac{x}
    {|x|}\right)\frac{x}{|x|^d}
\end{align}
which is homogeneous of degree $1-d$ and smooth outside the origin.

\noindent\textbf{Step 2:}
We can now apply the same idea to construct solution operators of the linearized constraint equations.
\begin{align}
    \partial_i\partial_j h^{ij}=f\label{lin_double}\\
    \partial_i\pi^{ij}=g^j.\label{lin_sym}
\end{align}
For the first equation \eqref{lin_double} we may just apply the previous solution operator twice. The fundamental solution has the integration kernel
\begin{align*}
    K^{ij}_{\chi}=\left(\chi\left(\frac{x}{|x|}\right)\frac{x_i}{|x|^d}\right)*\left(\chi\left(\frac{x}{|x|}\right)\frac{x_j}{|x|^d}\right).
\end{align*}

For the second equation \eqref{lin_sym}, we need to first find singular fundamental solutions as before. For $v,w\in \mathbb{S}^{d-1}$, let
  \begin{align*}
    \pi^{jk}=\partial_l\phi (v^jv^lw^k+v^lv^kw^j-w^lv^kv^j),
\end{align*}
the equation \eqref{lin_sym} becomes
\begin{align*}
    \partial_j\pi^{jk}=\partial_j\partial_l \phi v^j v^l w^k=g^k.
\end{align*}
Then
\begin{align*}
    \langle\phi,f\rangle=\int_0^\infty tf(tv)dt
\end{align*}
gives a fundamental solution $L_{v,w}$ such that $\partial_{j}\pi^{jk}=\delta w^k$ and $\pi^{jk}$ is a symmetric tensor.
Averaging along $v$ as before, we get
\begin{align*}
    \left\langle\int_{\mathbb{S}^{d-1}}\chi(v) L_{v,w}dv, f\right\rangle&=-\int_{\mathbb{S}^{d-1}}\chi(v)\int_0^\infty  t(\partial_l f)(tv)(v^jv^lw^k+v^lv^kw^j-w^lv^kv^j)dtdv\\
    &=-\int_{\mathbb{R}^d}\chi\left(\frac{x}{|x|}\right)(\partial_l f)(x)    \frac{x^jx^l w^k+x^lx^kw^j-w^lx^kx^j}{|x|^d} dx\\
    &=\left\langle  \partial_l\left(\chi\left(\frac{x}{|x|}\right)   \frac{x^jx^l w^k+x^lx^kw^j-w^lx^kx^j}{|x|^d}\right), f\right\rangle.
\end{align*}
So the fundamental solution reads
\begin{align*}
    L_{\chi,w}=\partial_l\left(\chi\left(\frac{x}{|x|}\right)   \frac{x^jx^l w^k+x^lx^kw^j-w^lx^kx^j}{|x|^d}\right).
\end{align*}
All the properties of the solution operators follow directly from the construction.\qedhere

\end{proof}

\section{Estimates on the $\bm$-Sobolev spaces}\label{weight}
In order to capture the decay property of functions and get optimal regularity, we recall basic estimates for $\bm$-Sobolev spaces in this section, and prove our solution operators are bounded on $\bm$-Sobolev spaces.
\subsection{The $\bm$-Sobolev space}
\begin{defi}
For $s\in\NN_0$, the $\bm$-Sobolev space $\Hb^{s}$ is defined by the norm
\begin{align*}
    \|u\|_{\Hb^s}^2:=\sum\limits_{k\leq s}\|\langle x\rangle^k \nabla^k u\|_{L^2(\RR^d)}^2.
\end{align*}
We extend the definition to $s\in\RR$ by duality and (complex) interpolation. We further define for $\delta\in\RR$, $\Hb^{s,\delta}:=\langle x\rangle^{-\delta}\Hb^s$. Moreover, for $\Omega\subset\RR^d$, we denote $\Hb^{s,\delta}(\Omega):=\{u\in \Hb^{s,\delta}:\supp u\subset\overline{\Omega}\}$ the supported distributions.
\end{defi}

The $\bm$-Sobolev space captures the property that the decay rate of a function improves by $\langle x\rangle^{-1}$ after taking derivative. We recall some properties of the $\bm$-Sobolev space.
\begin{prop}\label{prop:b_Sobolev}
\begin{itemize}
    \item Littlewood--Paley decomposition: let
    $\mathcal{D}_0=B(0,1)$ and
    $\mathcal{D}_{j}=\{2^{j-2}<|x|<2^{j+2}\}$ for $j\geq 1$, and $\Phi_0={\rm id}$,
    \begin{align*}
        \Phi_j: (-2,2)\times \mathbb{S}^{d-1}\to \mathcal{D}_j,\quad (w,z)\mapsto 2^{w+j}z,\quad j\geq 1.
    \end{align*}
    Let $\sum \chi_j=1$ be a partition of unity such that $\chi_0\in C_0^\infty(\mathcal{D}_0)$, $\chi\in C_0^\infty((-2,2)\times \mathbb{S}^{d-1})$ and $\chi_j=\Phi_{j*}(\chi)$ for $j\geq 1$. Then 
    \begin{align}\label{eq:L-P decomposition b}
        \|u\|_{\Hb^{s,\delta}}^2\approx \sum\limits_j 2^{2j(\delta+d/2)}\|\Phi_j^*(\chi_j u)\|_{H^s}^2.
    \end{align}
    \item Sobolev embedding: for $s>d/2$, \begin{align}\label{eq:b-Sobolev embedding}
        \|\langle x\rangle^{d/2+\delta}u\|_{L^\infty}\lesssim \|u\|_{\Hb^{s,\delta}}.
    \end{align}
    \item bilinear estimate: the multiplication is bounded on the following spaces
    \begin{align}\label{eq:b-bilinear}
    (u,v)\mapsto uv: \Hb^{s_1,\delta_1}\times \Hb^{s_2,\delta_2}\to \Hb^{s,\delta}
    \end{align}
    for $\delta_1+\delta_2=\delta-d/2$, $s_1+s_2>0$,
    \begin{align*}
    s=\left\{\begin{array}{ll}
        \min(s_1,s_2),     &\max(s_1,s_2)>\frac{d}{2},  \\ 
        s_1+s_2-\frac{d}{2},     &\max(s_1,s_2)\leq \frac{d}{2}. 
        \end{array}\right.
    \end{align*}
\end{itemize}
\end{prop}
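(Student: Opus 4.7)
The plan is to prove all three properties by reducing them to standard Sobolev-space facts after rescaling each dyadic annulus $\mathcal{D}_j$ to unit size via $\Phi_j$. First I would prove the Littlewood--Paley equivalence \eqref{eq:L-P decomposition b} for nonnegative integer $s$ by direct computation, then extend it to general $s \in \RR$ via duality and complex interpolation. Since $\sum_j \chi_j = 1$ has finite overlap, $\nrm{u}_{\Hb^{s,\delta}}^2 \simeq \sum_j \nrm{\chi_j u}_{\Hb^{s,\delta}}^2$. Using $\Hb^{s,\delta} = \brk{x}^{-\delta}\Hb^s$ and the product rule (with $\partial \brk{x}^\delta = O(\brk{x}^{\delta-1})$), the norm localized to $\mathcal{D}_j$ is equivalent to $\sum_{l \leq s} \nrm{\brk{x}^{l+\delta} \nabla^l(\chi_j u)}_{L^2}^2$. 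On $\mathcal{D}_j$ for $j \geq 1$ one has $\brk{x} \simeq 2^j$, contributing $2^{2j(l+\delta)}$; the change of variable $y = \Phi_j^{-1}(x)$ contributes $2^{jd}$ from the Jacobian and $2^{-2jl}$ from each conversion $\nabla_x \to 2^{-j}\nabla_y$. Collecting powers of $2^j$ produces exactly $2^{2j(\delta + d/2)} \nrm{\Phi_j^*(\chi_j u)}_{H^s}^2$ as claimed.

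For the Sobolev embedding \eqref{eq:b-Sobolev embedding}, I would apply the classical $H^s \hookrightarrow L^\infty$ bound (valid for $s > d/2$) on the fixed manifold $(-2,2) \times \mathbb{S}^{d-1}$ to the rescaled pieces $\Phi_j^*(\chi_j u)$. For $x \in \mathcal{D}_j$ this yields
\begin{align*}
\brk{x}^{d/2+\delta} \abs{(\chi_j u)(x)} \lesssim 2^{j(d/2+\delta)} \nrm{\Phi_j^*(\chi_j u)}_{L^\infty} \lesssim 2^{j(d/2+\delta)} \nrm{\Phi_j^*(\chi_j u)}_{H^s} \lesssim \nrm{u}_{\Hb^{s,\delta}},
\end{align*}
where the last step uses the single-term bound from \eqref{eq:L-P decomposition b}. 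Summing the $O(1)$ overlapping indices $j$ at each point finishes the argument.

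For the bilinear estimate \eqref{eq:b-bilinear}, the finite overlap of the $\chi_j$ gives $\chi_j(uv) = \chi_j u \cdot \sum_{|j'-j|\leq 2} \chi_{j'} v$. Pulling back by $\Phi_j$ and applying the classical bilinear estimate $H^{s_1} \times H^{s_2} \to H^s$ on $(-2,2) \times \mathbb{S}^{d-1}$ (whose range hypothesis matches the stated trichotomy for $s$) yields
\begin{align*}
\nrm{\Phi_j^*\chi_j(uv)}_{H^s} \lesssim \nrm{\Phi_j^*(\chi_j u)}_{H^{s_1}} \sum_{|j'-j|\leq 2} \nrm{\Phi_{j'}^*(\chi_{j'} v)}_{H^{s_2}}.
\end{align*}
Multiplying by $2^{2j(\delta+d/2)}$ and summing in $j$, the balance $\delta_1 + \delta_2 = \delta - d/2$ factors the weight as $2^{2j(\delta_1+d/2)} \cdot 2^{2j(\delta_2+d/2)}$, and $\sum_j A_j B_j \leq (\sum_j A_j)(\sum_j B_j)$ for positive sequences closes the estimate via \eqref{eq:L-P decomposition b}.

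The main obstacle is verifying the compatibility of the duality/interpolation definition with the Littlewood--Paley characterization for non-integer or negative $s$, which is needed to make the three bullets interact cleanly. The cleanest workaround is to take \eqref{eq:L-P decomposition b} as the definition of $\Hb^{s,\delta}$ for all $s \in \RR$, check it agrees with the direct definition for $s \in \NN_0$, and verify that duality and complex interpolation act on the right-hand side in the expected way (since after rescaling the norms reduce to ordinary $H^s$ norms on a fixed compact manifold, where these properties are classical).
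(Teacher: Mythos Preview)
Your proposal is correct and follows essentially the same route as the paper: establish \eqref{eq:L-P decomposition b} for $s\in\NN_0$ by direct rescaling and extend by duality/interpolation, then deduce both \eqref{eq:b-Sobolev embedding} and \eqref{eq:b-bilinear} by pulling back to the unit-scale model $(-2,2)\times\mathbb{S}^{d-1}$ and invoking the standard Sobolev embedding and product estimates there. The paper handles the duality/interpolation compatibility you flag by citing \cite[Lemma~2.3]{hintz2022gluing}, but your sketch of how to verify it directly is sound.
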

\begin{proof}
\begin{itemize}
    \item The Littlewood--Paley decomposition \eqref{eq:L-P decomposition b} is \cite[Lemma 2.3]{hintz2022gluing}. By interpolation and duality, it suffices to check for $s\in\NN_0$, and it is direct to check \eqref{eq:L-P decomposition b} inductively for $s$. Note our convention on $\delta$ is shifted by $d/2$ due to a different choice of density near the boundary.
    \item The Sobolev embedding is essentially \cite[Corollary 2.4]{hintz2022gluing}. It is proved as a corollary of \eqref{eq:L-P decomposition b}:
    \begin{align*}
        \|\langle x\rangle^{d/2+\delta}u\|_{L^\infty}\lesssim \sup_j 2^{j(d/2+\delta)}\|\chi_j u\|_{L^\infty}\lesssim \sup_j 2^{j(d/2+\delta)}\|\Phi_j^*(\chi_j u)\|_{H^s}\lesssim \|u\|_{\Hb^{s,\delta}}.
    \end{align*}
    \item The bilinear estimate \eqref{eq:b-bilinear} is again a corollary of \eqref{eq:L-P decomposition b}:
    \begin{align*}
        \|uv\|_{\Hb^{s,\delta}}^2&\lesssim \sum\limits_j 2^{2j(\delta+d/2)}\|\Phi_j^*(\chi_j u v)\|_{H^s}^2\\
        &\lesssim \sum\limits_j 2^{2j(\delta+d/2)}\|\Phi_j^*(\chi_j u )\|_{H^{s_1}}^2\|\Phi_j^*(\tilde{\chi}_j v )\|_{H^{s_2}}^2\\
        &\lesssim \sum\limits_j 2^{2j(\delta+d/2)}\|\Phi_j^*(\chi_j u )\|_{H^{s_1}}^2\|\Phi_j^*(\tilde{\chi}_j v )\|_{H^{s_2}}^2
        \\&\lesssim \left(\sum\limits_j 2^{2j(\delta_1+d/2)}\|\Phi_j^*(\chi_j u )\|_{H^{s_1}}^2\right)\left(\sum\limits_j 2^{2j(\delta_2+d/2)}\|\Phi_j^*(\tilde{\chi}_j u )\|_{H^{s_2}}^2\right)\\
        &\lesssim \|u\|_{\Hb^{s_1,\delta_1}}^2\|v\|_{\Hb^{s_2,\delta_2}}^2.\qedhere
    \end{align*}
\end{itemize}
\end{proof}

We now show our solution operator is bounded on $\Hb^{s,\delta}$.
\begin{prop}\label{prop:bound_sol}
Let $K\in C^\infty(\RR^d\setminus\{0\})\cap \mathcal{D}'(\RR^d)$ be a homogeneous distribution of order $k-d$, $0<k<d$, then for $-d/2<\delta<d/2-k$, we have
\begin{align}\label{eq:bound_sol}
    u\mapsto K*u :\Hb^{s-k,\delta+k}\to \Hb^{s,\delta}.
\end{align}
\end{prop}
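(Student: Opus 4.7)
The plan is to prove \eqref{eq:bound_sol} via the Littlewood--Paley characterization \eqref{eq:L-P decomposition b} combined with a Schur-test argument on dyadic pieces. Writing $u = \sum_{j'} \chi_{j'} u$, I aim to establish pairwise estimates of the form
\[
\|\Phi_j^*(\chi_j\,K*(\chi_{j'}u))\|_{H^s} \lesssim c_{j,j'}\,\|\Phi_{j'}^*(\chi_{j'}u)\|_{H^{s-k}}.
\]
After incorporating the weight $2^{j(\delta+d/2)}$ on the target and $2^{-j'(\delta+k+d/2)}$ on the source (from \eqref{eq:L-P decomposition b}), the rescaled coefficients $\tilde c_{j,j'} = 2^{j(\delta+d/2)}\, c_{j,j'}\, 2^{-j'(\delta+k+d/2)}$ form a Schur kernel, and \eqref{eq:bound_sol} follows as soon as $\sup_j \sum_{j'}\tilde c_{j,j'} + \sup_{j'}\sum_j\tilde c_{j,j'} < \infty$.

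In the diagonal regime $|j-j'|\le C$, I would rescale by $x\mapsto 2^j x$; since $K$ is homogeneous of degree $k-d$, this reduces the convolution to a unit-scale operator with an overall bookkeeping factor of $2^{jk}$. A cutoff $\phi K$, with $\phi\in C^\infty_c$ equal to $1$ on the relevant set of differences $x-y$, has Fourier transform smooth and bounded at low frequency and behaving like $|\xi|^{-k}$ at high frequency (since $\widehat K$ is homogeneous of degree $-k$ and smooth off $0$); it is therefore a Fourier multiplier of order $-k$ and maps $H^{s-k}(\RR^d)\to H^s(\RR^d)$. Transferring back between the linear rescaling and the logarithmic $\Phi_j$ costs only bounded factors, giving $\tilde c_{j,j'}=O(1)$ on the diagonal.

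In the off-diagonal regime, say $j>j'+C$, the supports of $\chi_j$ and $\chi_{j'}u$ are separated with $|x-y|\sim 2^j$, so by homogeneity $|\partial_x^\alpha K(x-y)|\lesssim 2^{j(k-d-|\alpha|)}$. Differentiating under the integral and applying Cauchy--Schwarz yields $\|\chi_j K*(\chi_{j'}u)\|_{L^2}\lesssim 2^{j(k-d/2)}\cdot 2^{j'd/2}\|\chi_{j'}u\|_{L^2}$, together with the analogous $H^s$ estimates; a symmetric bound holds when $j'>j+C$. Translated to the weighted scales, $\tilde c_{j,j'}$ exhibits geometric decay with rate $d/2-k-\delta$ for $j>j'$ and rate $\delta+d/2$ for $j'>j$, so both tails are summable exactly when $-d/2<\delta<d/2-k$, which is the stated range. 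Interpolation and duality then extend the estimate from integer $s\ge k$ to all $s\in\RR$. The main technical obstacle is the careful weight and scale bookkeeping through the $\Phi_j$-rescaling; once that is in place, the two regimes assemble cleanly via Schur's test.
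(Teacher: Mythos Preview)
Your proposal is correct and follows essentially the same approach as the paper: both use the dyadic Littlewood--Paley characterization \eqref{eq:L-P decomposition b}, treat the near-diagonal interaction by rescaling and homogeneity (reducing to a unit-scale order $-k$ operator), and handle the off-diagonal pieces via the pointwise bound $|K(x-y)|\lesssim 2^{\max(j,j')(k-d)}$ together with a Schur/weighted Cauchy--Schwarz summation that is finite precisely when $-d/2<\delta<d/2-k$. The only cosmetic difference is that the paper splits the \emph{kernel} into $K_{\rm diag}+K_{\rm in}+K_{\rm out}$ before estimating, whereas you split the \emph{input} $u=\sum_{j'}\chi_{j'}u$ and organize by $|j-j'|$; the computations are otherwise identical.
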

\begin{proof}
We decompose $K(x-y)=K_{\rm diag}+K_{\rm in}+K_{\rm out}$, where 
\begin{align*}
    K_{\rm diag}(x,y)=K(x-y)\varphi\left(\frac{|x|-|y|}{|y|}\right)\chi_{>1}(y)+K(x-y)\chi_{<3}(x)\chi_{<2}(y)
\end{align*}
where $\varphi\in C_0^\infty(-1/2,1/2)$ is a cutoff function such that $\varphi=1$ near $0$, and $\chi_{>1}\in C_0^\infty(\RR^2\setminus \overline{B(0,1)};[0,1])$, $\chi_{<2}\in C_0^\infty(B(0,2);[0,1])$, $\chi_{<3}\in C_0^\infty(B(0,3);[0,1])$ are cutoff functions so that $\chi_{>1} (y)+\chi_{<2}(y)=1$ and $\chi_{<3}(x)=1$ for $|x|\leq 2$; $K_{\rm in}=(K-K_{\rm diag})\mathbbm{1}_{|x|<|y|}$ is incoming part of $K$, and $K_{\rm out}=(K-K_{\rm diag})\mathbbm{1}_{|x|>|y|}$ is the outgoing part of $K$.

The diagonal part is bounded:
\begin{align*}
    \|K_{\rm diag} u\|_{\Hb^{s,\delta}}^2&\approx \sum\limits_j 2^{2j(\delta+d/2)}\left\|\Phi_j^*\chi_j \int K_{\rm diag}(\Phi_j(x),y)u(y)dy\right\|_{H^s}^2\\
    &\approx \sum\limits_j 2^{2j(\delta+d/2)}\left\|\Phi_j^*\chi_j \int K_{\rm diag}(\Phi_j(x),\Phi_j(y))u(\Phi_j(y))|\Phi_j'(y)|dy\right\|_{H^s}^2\\
    &\lesssim \sum\limits_j 2^{2j(\delta+d/2)}2^{2jk}\|\Phi_j^*(\tilde{\chi}_j u)\|_{H^{s-k}}^2\\
    &\approx \|u\|_{\Hb^{s-k,\delta+k}}^2.
\end{align*}
In the second last step we use the fact that $|\Phi_j'(y)|\approx 2^{jd}$ and $K$ is homogeneous of order $k-d$.

The outgoing part is bounded for $\delta<d/2-k$:
\begin{align*}
    \|K_{\rm out} u\|_{\Hb^{s,\delta}}^2&\approx \sum\limits_j 2^{2j(\delta+d/2)}\left\|\Phi_j^*\chi_j \int K_{\rm out}(\Phi_j(x),y)u(y)dy\right\|_{H^s}^2\\
    &\approx \sum\limits_j 2^{2j(\delta+d/2)}\left\|\sum\limits_{j'<j}\Phi_j^*\chi_j \int K_{\rm out}(\Phi_j(x),\Phi_{j'}(y))u(\Phi_{j'}(y))|\Phi_{j'}'(y)|dy\right\|_{H^s}^2\\
    &\lesssim \sum\limits_j 2^{2j(\delta+d/2)}\left(\sum\limits_{j'<j}  2^{j(k-d)}2^{j'd}\|\Phi_{j'}^*(\tilde{\chi}_{j'} u)\|_{H^{-N}}\right)^2\\
    &\lesssim\sum\limits_j 2^{2j(\delta+d/2)}\left(\sum\limits_{j'<j}  2^{j(k-d)}2^{j'd}2^{\beta(j-j')}\right)\left(\sum\limits_{j'<j}  2^{j(k-d)}2^{j'd}2^{\beta(j'-j)}\|\Phi_{j'}^*(\tilde{\chi}_{j'} u)\|_{H^{-N}}^2\right).
\end{align*}
Since $\delta<d/2-k$, we may choose $\beta\in\RR$ such that $d-\beta>0$ and $ 2\delta+2k-\beta<0$,
so
\begin{align*}
    \sum\limits_{j'<j}2^{j(k-d)}2^{j'd}2^{\beta(j-j')}\lesssim 2^{jk},\quad
    \sum\limits_{j'<j}2^{2j(\delta+d/2)}2^{jk}2^{j(k-d)}2^{j'd}2^{\beta(j'-j)}\lesssim 2^{2j'(\delta+d/2+k)}.
\end{align*}
Thus we conclude
\begin{align*}
    \|K_{\rm  out}u\|_{\Hb^{s,\delta}}^2\lesssim \sum\limits_{j'}2^{2j'(\delta+d/2+k)}\|\Phi_{j'}^*(\tilde{\chi}_{j'} u)\|_{H^{-N}}^2\approx \|u\|_{\Hb^{-N,\delta+k}}^2.
\end{align*}
We could similarly conclude the incoming part is bounded:
\begin{align*}
    \|K_{\rm in}u\|_{\Hb^{s,\delta}}\lesssim \|u\|_{\Hb^{-N,\delta+k}}
\end{align*}
as long as $\delta>-d/2$.\qedhere

\end{proof}

\begin{corr}
Let $\Omega\subset \RR^d$ be a closed subset. In Proposition \ref{prop:bound_sol}, if we assume $\supp(K*u)\subset \Omega$ for any $u\in C_c^\infty(\Omega)$, and $K$ is outgoing, i.e. $K_{\rm in}(x,y)=0$ for sufficiently large $(x,y)\in\Omega\times\Omega$, then for any $\delta<d/2-k$,
\begin{align}\label{eq:outgoing_bound_sol}
    u\mapsto K*u:\Hb^{s-k,\delta+k}(\Omega)\to \Hb^{s,\delta}(\Omega).
\end{align}
Similarly, if $K$ is incoming, i.e. $K_{\rm out}(x,y)=0$ for sufficiently large $(x,y)\in\Omega\times\Omega$, then \eqref{eq:outgoing_bound_sol} holds for any $\delta>-d/2$.
\end{corr}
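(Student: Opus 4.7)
The plan is to reuse the splitting $K(x-y) = K_{\rm diag}(x,y) + K_{\rm in}(x,y) + K_{\rm out}(x,y)$ from the proof of Proposition \ref{prop:bound_sol}. The estimates proved there for $K_{\rm diag}$ (which required no restriction on $\delta$) and for $K_{\rm out}$ (which required only $\delta < d/2 - k$) carry over verbatim to functions supported in $\Omega$. The only obstruction to removing the lower bound $\delta > -d/2$ comes from the incoming piece, so under the outgoing hypothesis it suffices to re-estimate the operator with kernel $K_{\rm in}$.

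The outgoing hypothesis provides $R_0 > 0$ such that $K_{\rm in}(x,y) = 0$ on $\Omega \times \Omega$ whenever $|x| + |y| > R_0$. For $u \in \Hb^{s-k,\delta+k}(\Omega)$ (first taken smooth and compactly supported, then passing to the limit by density), the integral $\int K_{\rm in}(x,y) u(y)\, dy$ vanishes for $|x| > R_0$ and depends only on $u|_{\Omega \cap \overline{B}_{R_0}}$. Consequently the Littlewood--Paley expansion \eqref{eq:L-P decomposition b} of $K_{\rm in} u$ truncates to a finite range of scales $j \leq J_0 \approx \log_2 R_0$:
\begin{align*}
    \|K_{\rm in} u\|_{\Hb^{s,\delta}(\Omega)}^2 \lesssim \sum_{j \leq J_0} 2^{2j(\delta+d/2)} \|\Phi_j^*(\chi_j K_{\rm in} u)\|_{H^s}^2.
\end{align*}
Since $K_{\rm in}$ only couples scale $j$ of the output to scales $j' > j$ of the input, and both are bounded by $J_0$, each term is estimated by exactly the rescaling argument used for $K_{\rm diag}$ in the proof of Proposition \ref{prop:bound_sol}, using the homogeneity of $K$ of order $k-d$. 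The resulting double sum is finite, so the geometric-series manipulation that forced $\delta > -d/2$ in the original incoming estimate is unnecessary; one obtains a bound
\begin{align*}
    \|K_{\rm in} u\|_{\Hb^{s,\delta}(\Omega)} \lesssim C(R_0,\delta)\, \|u\|_{\Hb^{s-k,\delta+k}(\Omega)}
\end{align*}
valid for every $\delta \in \RR$. Combined with the unchanged $K_{\rm diag}$ and $K_{\rm out}$ bounds, this gives \eqref{eq:outgoing_bound_sol}. The incoming case is completely symmetric: one now re-estimates $K_{\rm out}$ on a compact range of scales, and the upper bound $\delta < d/2 - k$ becomes unnecessary.

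The main obstacle is the book-keeping: verifying that the scale truncation on the compact region $\Omega \cap \overline{B}_{R_0}$ is valid at the level of supported distributions (handled by a standard density argument with cutoffs) and that the dyadic off-diagonal estimate used for $K_{\rm diag}$ applies uniformly across pairs $(j,j')$ of input/output scales. No new analytic ingredient beyond the homogeneity of $K$ and the decomposition of Proposition \ref{prop:bound_sol} is required.
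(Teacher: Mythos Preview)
Your proposal is correct and matches the paper's own argument, which is literally the single sentence ``It follows directly from the proof of Proposition~\ref{prop:bound_sol}.'' You have simply unpacked what that sentence means: the diagonal and outgoing estimates already hold without the lower bound on $\delta$, and the outgoing hypothesis forces the remaining $K_{\rm in}$ piece to act only between finitely many dyadic scales, where no geometric-series summation (and hence no constraint $\delta>-d/2$) is needed.

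One small remark on phrasing: when you say ``each term is estimated by exactly the rescaling argument used for $K_{\rm diag}$,'' that is slightly misleading. The $K_{\rm diag}$ estimate is a single-scale pseudodifferential bound, whereas the terms you are controlling here are the off-diagonal $(j,j')$ pairs appearing in the $K_{\rm in}$ estimate of Proposition~\ref{prop:bound_sol}. A cleaner way to say it is that the same pointwise bound $|K_{\rm in}(\Phi_j(x),\Phi_{j'}(y))|\lesssim 2^{j'(k-d)}$ used in the off-diagonal analysis still holds term by term, and since only finitely many pairs $(j,j')\le J_0$ occur, the double sum is trivially finite with a constant depending on $R_0$ and $\delta$. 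Alternatively, since $K_{\rm in}$ has $|x-y|$ bounded below on its support, its restriction to the compact region is a smoothing operator, which also disposes of the issue immediately. Either route completes the argument without new ideas.
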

\begin{proof}
It follows directly from the proof of Proposition \ref{prop:bound_sol}.
\end{proof}

\subsection{Smoothness of curvature}

As a corollary, we know the maps we consider will be smooth.
\begin{corr}\label{cor:bilin_cor}
Let $s>d/2$, $\delta>-d/2$. In a small neighbourhood of $\delta_{ij}$, the inverse matrix map
\begin{align*}
    (g_{ij})\mapsto (g^{ij}): \Hb^{s,\delta}\to \Hb^{s,\delta}
\end{align*}
is a smooth isomorphism.
\end{corr}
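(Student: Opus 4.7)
The plan is to reduce matrix inversion near $\delta_{ij}$ to the summation of a Neumann series in $\Hb^{s,\delta}$. Writing $h_{ij} = g_{ij} - \delta_{ij}$, pointwise matrix inversion gives
\[
    g^{ij} - \delta^{ij} = \sum_{k=1}^{\infty} (-1)^{k} (h^{k})^{ij},
\]
where $h^{k}$ denotes the $k$-fold matrix product of $h$. I would first check pointwise invertibility: since $s > d/2$ and $\delta > -d/2$, the Sobolev embedding \eqref{eq:b-Sobolev embedding} gives $\|h\|_{L^{\infty}} \lesssim \|h\|_{\Hb^{s,\delta}}$, so $\delta_{ij} + h_{ij}$ is invertible at every point whenever $h$ lies in a small enough $\Hb^{s,\delta}$-neighborhood of $0$.

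The central step is to establish that $\Hb^{s,\delta}$ is a Banach algebra under pointwise multiplication. The bilinear estimate \eqref{eq:b-bilinear} applied with $s_1 = s_2 = s > d/2$ gives bounded multiplication $\Hb^{s,\delta} \times \Hb^{s,\delta} \to \Hb^{s, 2\delta + d/2}$; since $\delta > -d/2$, the exponent satisfies $2\delta + d/2 > \delta$, and the Littlewood--Paley characterization \eqref{eq:L-P decomposition b} produces the continuous embedding $\Hb^{s, 2\delta + d/2} \hookrightarrow \Hb^{s,\delta}$ (for every $j \geq 0$ one has $2^{2j(\delta + d/2)} \leq 2^{2j(2\delta + d)}$ because $\delta + d/2 \geq 0$). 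Combining these gives $\|uv\|_{\Hb^{s,\delta}} \leq C \|u\|_{\Hb^{s,\delta}} \|v\|_{\Hb^{s,\delta}}$, hence $\|h^{k}\|_{\Hb^{s,\delta}} \leq C^{k-1}\|h\|_{\Hb^{s,\delta}}^{k}$, and the Neumann series converges absolutely once $C\|h\|_{\Hb^{s,\delta}} < 1$.

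Smoothness then comes for free: the map $h \mapsto g^{ij} - \delta^{ij}$ is the sum of a uniformly convergent power series of bounded multilinear operators between Banach spaces, hence real-analytic and in particular $C^{\infty}$. Since matrix inversion is an involution, the same argument applied to the map $g^{ij} \mapsto g_{ij}$ on a small neighborhood of $\delta^{ij}$ gives a smooth isomorphism between the two neighborhoods. The only subtle point, and the real content of the hypothesis $\delta > -d/2$, is the Banach algebra property: $\Hb^{s,\delta} \cdot \Hb^{s,\delta}$ does not literally land inside $\Hb^{s,\delta}$ but in the strictly better space $\Hb^{s, 2\delta + d/2}$, and it is precisely the strict positivity of $\delta + d/2$ that lets this improved space embed back into $\Hb^{s,\delta}$ and close the Neumann series argument.
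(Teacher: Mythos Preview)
Your proof is correct and follows essentially the same route as the paper: both reduce matrix inversion to a Neumann series and invoke the bilinear estimate \eqref{eq:b-bilinear} to control the products. You supply more detail than the paper does---explicitly deriving the Banach algebra property via the embedding $\Hb^{s,2\delta+d/2}\hookrightarrow\Hb^{s,\delta}$ and addressing the isomorphism claim by applying the same argument to the inverse direction---whereas the paper simply sketches the derivative formulas $DT_g,\,D^2T_g,\dots$ and appeals to the bilinear estimate.
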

\begin{proof}
Since multiplication is bounded by Proposition \ref{prop:b_Sobolev}, it suffices to prove the map
\begin{align*}
    T:h\mapsto \frac{1}{1-h}=1+h+h^2+h^3+\cdots
\end{align*}
is smooth for $\|h\|_{\Hb^{s,\delta}}\ll 1$. The boundedness is a corollary of the bilinear estimate. To prove the boundedness of the derivatives, just observe
\begin{align*}
    DT_g(h)=(1+2g+3g^2+\cdots)h, \quad D^2T_g(h_1,h_2)=(2+3g+4g^2+\cdots)h_1h_2,\cdots
\end{align*}
are all continuous multilinear maps in $\Hb^{s,\delta}$.
\end{proof}
\begin{prop}
For $s>d/2$, $\delta>-d/2$, the functional
\begin{align*}
    (h,\pi)\mapsto(M^{(2)}_h(h,\partial^2 h),M^{(1)}_h(\partial h,\partial h),M^{(0)}_h(\pi,\pi)
    ,N^{(1)j}(h,\partial\pi),N^{(0)j}(\partial h,\pi))
\end{align*}
is smooth $\Hb^{s,\delta}\times \Hb^{s-1,\delta+1}\to \Hb^{s-2,\delta+2}$.
\end{prop}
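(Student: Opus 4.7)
The plan is to reduce the proof to two ingredients: smoothness of the $h$-dependent tensor coefficients, and continuity of the resulting trilinear maps between appropriate $\bm$-Sobolev spaces. By the definitions of $M^{(n)}_{h}$ and $N^{(n)j}_{h}$, each of the five components is a finite linear combination of contractions of the form $T(h) \cdot A \cdot B$, where $A, B \in \{h, \partial h, \partial^{2} h, \pi, \partial \pi\}$ and $T(h)$ is a smooth tensor-valued function of $h$ built from polynomials in $h$ and $g^{ij}$, the latter being the matrix inverse of $g_{ij}$ as given by \eqref{eq:hpi2gk}.

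For the coefficient map, I would write $T(h) = T(0) + \tilde{T}(h)$ with $T(0)$ a constant tensor and $\tilde{T}(h) \in \Hb^{s,\delta}$ for small $h$. Corollary \ref{cor:bilin_cor} provides the smoothness of $h \mapsto g^{ij}$ from a neighborhood of $0$ in $\Hb^{s,\delta}$ to $\Hb^{s,\delta}$, and composing with polynomial operations (each continuous multilinear by the bilinear estimate \eqref{eq:b-bilinear}) yields that $h \mapsto \tilde{T}(h)$ is smooth into $\Hb^{s,\delta}$.

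For the trilinear map $(h, A, B) \mapsto T(h) \cdot A \cdot B$, I would apply the bilinear estimate \eqref{eq:b-bilinear} twice. The representative cases are: $h \cdot \partial^{2} h$ and $h \cdot \partial\pi$, with one factor in $\Hb^{s,\delta}$ and the other in $\Hb^{s-2,\delta+2}$; and the quadratic derivative terms $\partial h \cdot \partial h$, $\partial h \cdot \pi$, $\pi \cdot \pi$ with both factors in $\Hb^{s-1,\delta+1}$. In the first type, since $s > d/2$, the first branch of \eqref{eq:b-bilinear} gives a product in $\Hb^{s-2,\, 2\delta + 2 + d/2}$, which is contained in $\Hb^{s-2,\delta+2}$ because $\delta > -d/2$. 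For the quadratic type, one uses whichever branch applies depending on whether $s - 1 > d/2$; in either branch the target weight $2\delta+2+d/2$ still exceeds $\delta+2$, and the regularity $\min(s_1,s_2)$ or $s_1+s_2-d/2$ is $\geq s-2$ under $s > d/2$. Multiplying once more by $T(h)$ preserves membership in $\Hb^{s-2,\delta+2}$ by the same argument.

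Smoothness of the entire functional then follows since it is the composition of the smooth coefficient map $h \mapsto T(h)$ with a continuous multilinear map, together with the bounded linear derivative operators $\partial^{k}: \Hb^{s,\delta} \to \Hb^{s-k, \delta+k}$ (and analogously for $\pi$). The main obstacle is purely notational bookkeeping of the weights $\delta_i$ and regularities $s_i$ across the different product types, and verifying in each case that the target of the bilinear estimate embeds into $\Hb^{s-2,\delta+2}$; once the structure is extracted, no calculation beyond repeated application of Proposition \ref{prop:b_Sobolev} is required.
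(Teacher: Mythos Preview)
Your proposal is correct and follows essentially the same approach as the paper: reduce to smoothness of the inverse-matrix map (Corollary~\ref{cor:bilin_cor}) together with boundedness of polynomial/multilinear contractions via the bilinear estimate in Proposition~\ref{prop:b_Sobolev}. The paper's own proof is the one-sentence version of what you wrote; your explicit weight and regularity bookkeeping, and the splitting $T(h)=T(0)+\tilde T(h)$ to handle the constant part of the coefficient tensor, are exactly the details that the paper suppresses.
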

\begin{proof}
The map is the composition of the inverse matrix map and polynomial maps, which are all smooth by Proposition \ref{prop:b_Sobolev} and Corollary \ref{cor:bilin_cor}.
\end{proof}

\section{Solving the nonlinear equation}\label{4}
In this section we use our solution operators from Theorem \ref{thm:sol} to prove Theorem \ref{thm1} and \ref{thm1’}.
\subsection{Construction of solutions to the linearized equation}
We give a construction of compactly supported solutions of the homogeneous linearized equations \eqref{lin}.
A basic observation is to solve the double divergence equation we only need to solve the symmetric divergence equation
\begin{align}\label{divs}
    \partial_j \pi^{jk}=0
\end{align}
since this would give $\partial_j\partial_k \pi^{jk}=0$.
For \eqref{divs}, we may take an arbitrary $f\in C_0^\infty(\RR^d)$ and
\begin{align*}
    \pi^{11}=\partial_2\partial_2 f,\pi^{12}=-\partial_1\partial_2 f,\pi^{22}=\partial_1\partial_1 f, \pi^{jk}=0 \text{  for  } \{j,k\}\not\subset\{1,2\}.
\end{align*}
Then it is obvious that $\partial_j \pi^{jk}=0$.

\subsection{Fixed point theorem}
Let 
\begin{align*}
    P(h,\pi)&=(\partial_i\partial_j h^{ij},\partial_i\pi^{ij})\\
     \Phi(h,\pi)&=(M^{(2)}_h(h,\partial^2 h)+M^{(1)}_h(\partial h,\partial h)+M^{(0)}_h(\pi,\pi)
    ,N^{(1)j}(h,\partial\pi)+N^{(0)j}(\partial h,\pi)).
\end{align*}
Then the equations \eqref{cons} become
\begin{align*}
    P(h,\pi)=\Phi(h,\pi).
\end{align*}
Let $\Omega$ be a cone centered at $0$ in $\mathbb{R}^d$ and $(h_0,\pi_0)\in C^\infty_0(\Omega)$ be a solution of the linearized equation 
$P(h_0,\pi_0)=0$. Let $S:\Hb^{s-2,\delta+2}(\Omega)\to \Hb^{s,\delta}(\Omega)\times \Hb^{s-1,\delta+1}(\Omega)$ be the solution operator given by Theorem \ref{thm:sol} with $\supp \chi\subset \Omega\cap \mathbb{S}^{d-1}$, we consider the following fixed point problem
\begin{align*}
    (h,\pi)=S\Phi(h_0+h,\pi_0+\pi).
\end{align*}
Since $\Phi:\Hb^{s,\delta}(\Omega)\times \Hb^{s-1,\delta+1}(\Omega)\to \Hb^{s-2,\delta+2}(\Omega)$ is a smooth map with $d\Phi_0=0$, by choosing $\|(h,\pi)\|_{\Hb^{s,\delta}\times \Hb^{s-1,\delta+1}}\leq \epsilon/2$, $\|(h_0,\pi_0)\|_{\Hb^{s,\delta}\times \Hb^{s-1,\delta+1}}\leq \epsilon $ for some sufficiently small $\epsilon>0$ we get
\begin{align*}
    \|\Phi(h_0+h,\pi_0+\pi)\|_{\Hb^{s-2,\delta+2}}&\lesssim \epsilon^2,\\
    \|\Phi(h_0+h,\pi_0+\pi)-\Phi(h_0+\tilde{h},\pi_0+\tilde{\pi})\|_{\Hb^{s-2,\delta+2}}&\lesssim \epsilon\|(h,\pi)-(\tilde{h},\tilde{\pi})\|_{\Hb^{s,\delta}\times \Hb^{s-1,\delta+1}}.
\end{align*}

Since $S$ is again bounded, by Banach fixed point theorem, there exists a unique fixed point $(h_1,\pi_1)\in \Hb^{s,\delta}(\Omega)\times \Hb^{s-1,\delta+1}(\Omega)$ such that $\|(h_1,\pi_1)\|_{\Hb^{s,\delta}\times \Hb^{s-1,\delta+1}}\leq \epsilon/2$ and
\begin{align*}
    (h_1,\pi_1)=S\Phi(h_0+h_1,\pi_0+\pi_1).
\end{align*}
This implies
\begin{align*}
    P(h_0+h_1,\pi_0+\pi_1)=P(h_1,\pi_1)=\Phi(h_0+h_1,\pi_0+\pi_1).
\end{align*}

An alternative way is to notice that 
\begin{align*}
    P-\Phi: \Hb^{s,\delta}(\Omega)\times \Hb^{s-1,\delta+1}(\Omega)\to \Hb^{s-2,\delta+2}(\Omega)
\end{align*}
is a smooth map with surjective differential at $0$ (with a right inverse given by our solution operator). Thus locally $(P-\Phi)^{-1}(0)\cap {\rm nbd}(0)$ is diffeomorphic to $\ker P\cap {\rm nbd}(0)$.

Next we show the solution we get can be actually smooth, even though $s$ is fixed. The regularity comes from applying the solution operator to the nonlinearity to upgrade the regularity.
\begin{prop}\label{reg1}
Let $s>\frac{d}{2}, d\geq 3$. In the above construction, if we choose $(h_0,\pi_0)\in C_0^\infty$ small in $\Hb^{s,\delta}$, then $(h,\pi)\in C^\infty$. 
\end{prop}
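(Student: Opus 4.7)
The plan is to bootstrap the Sobolev regularity of the fixed point $(h_1,\pi_1)$ to arbitrary order, and then conclude $(h_1,\pi_1)\in C^\infty$ via the Sobolev embedding \eqref{eq:b-Sobolev embedding}. The key observation is that the entire fixed-point machinery of this section is fully parametric in the regularity index: for any $s'\geq s$, the bilinear estimate \eqref{eq:b-bilinear} keeps $\Phi:\Hb^{s',\delta}(\Omega)\times\Hb^{s'-1,\delta+1}(\Omega)\to\Hb^{s'-2,\delta+2}(\Omega)$ a smooth polynomial-type map with $d\Phi_0=0$, and Proposition \ref{prop:bound_sol} keeps $S$ bounded $\Hb^{s'-2,\delta+2}(\Omega)\to\Hb^{s',\delta}(\Omega)\times\Hb^{s'-1,\delta+1}(\Omega)$. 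Consequently, the operator $T_{s'}(h,\pi):=S\Phi(h_0+h,\pi_0+\pi)$ is a contraction on a small ball in $\Hb^{s',\delta}\times\Hb^{s'-1,\delta+1}$ with a unique fixed point there, \emph{provided} $(h_0,\pi_0)$ is small enough in that norm.

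To arrange this higher-order smallness, I would use the freedom afforded by $(h_0,\pi_0)\in C_0^\infty$: for any target $s'$, replacing $(h_0,\pi_0)$ by $\lambda(h_0,\pi_0)$ for sufficiently small $\lambda>0$ depending on $s'$ makes its $\Hb^{s',\delta}\times\Hb^{s'-1,\delta+1}$ norm as small as needed, while preserving all other hypotheses (membership in $C_0^\infty$, solving the linearized equation $P(h_0,\pi_0)=0$ by linearity of $P$, and smallness in $\Hb^{s,\delta}$). One reads the qualitative hypothesis ``$(h_0,\pi_0)\in C_0^\infty$ small in $\Hb^{s,\delta}$'' as permitting such refinement on an $s'$-by-$s'$ basis. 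Running the contraction in $\Hb^{s'}$ then produces $(h_1^{(s')},\pi_1^{(s')})\in\Hb^{s',\delta}(\Omega)\times\Hb^{s'-1,\delta+1}(\Omega)$. The continuous embedding $\Hb^{s'}\hookrightarrow\Hb^s$ places this fixed point inside the small $\Hb^s$-ball where Banach's fixed point is unique, forcing $(h_1^{(s')},\pi_1^{(s')})=(h_1,\pi_1)$. Iterating this over $s'$, we obtain $(h_1,\pi_1)\in\Hb^{s',\delta}(\Omega)\times\Hb^{s'-1,\delta+1}(\Omega)$ for every $s'\geq s$, whence Sobolev embedding gives $(h_1,\pi_1)\in C^\infty(\RR^d)$, and adding $(h_0,\pi_0)\in C_0^\infty$ yields $(h,\pi)\in C^\infty$ as claimed.

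The main technical subtlety I anticipate is that the required smallness of $(h_0,\pi_0)$ in $\Hb^{s'}$ becomes more stringent as $s'\to\infty$, so no single fixed rescaling $\lambda>0$ works uniformly. This is handled by applying the bootstrap one $s'$ at a time and using uniqueness of the Banach fixed point in the small $\Hb^s$-ball as the glue that identifies all the higher-regularity fixed points with $(h_1,\pi_1)$. A more technical but more intrinsic alternative would be a direct a priori estimate in $\Hb^{s'}$ via mollification of $(h_1,\pi_1)$ and commutator estimates, but the rerun-and-uniqueness strategy reuses only the machinery already developed in Sections \ref{2}--\ref{4}.
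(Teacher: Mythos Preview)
Your argument has a genuine gap in the ``glue'' step. When you replace $(h_0,\pi_0)$ by $\lambda_{s'}(h_0,\pi_0)$, you change the contraction map itself from $T(h,\pi)=S\Phi(h_0+h,\pi_0+\pi)$ to $T_{\lambda_{s'}}(h,\pi)=S\Phi(\lambda_{s'} h_0+h,\lambda_{s'}\pi_0+\pi)$. The object $(h_1^{(s')},\pi_1^{(s')})$ you obtain at level $s'$ is a fixed point of $T_{\lambda_{s'}}$, not of $T$; uniqueness of the Banach fixed point in the small $\Hb^s$-ball only identifies fixed points of the \emph{same} map, so it cannot force $(h_1^{(s')},\pi_1^{(s')})=(h_1,\pi_1)$. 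What you have actually produced is, for each $s'$, a different solution of the constraint equation attached to different seed data $\lambda_{s'}(h_0,\pi_0)$. Since, as you acknowledge, no single $\lambda>0$ works for all $s'$ (indeed for any fixed nonzero $(h_0,\pi_0)\in C_0^\infty$ the norm $\|(h_0,\pi_0)\|_{\Hb^{s'}}$ grows without bound as $s'\to\infty$), this route never yields one solution lying in every $\Hb^{s'}$.

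The proposition asserts smoothness of the fixed point attached to a \emph{fixed} $(h_0,\pi_0)$ that is small only in $\Hb^{s,\delta}$, and the paper's proof uses precisely the structural feature your rerun argument discards. Differentiating the fixed-point relation $(h,\pi)=S\Phi(h_0+h,\pi_0+\pi)$, the top-order contribution has the schematic form $S\big((h_0+h)\cdot\partial^3 h,\ (h_0+h)\cdot\partial^2\pi\big)$: one factor is always $h_0+h$ with no extra derivatives, and this factor is controlled by the original small $\Hb^{s,\delta}$ norm. That smallness (at the fixed level $s$) is what allows one to absorb the dangerous term and gain regularity in $H^{s'}_{\rm loc}$, iterating without ever demanding smallness of $(h_0,\pi_0)$ in higher norms. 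Your ``more intrinsic alternative'' via direct a~priori estimates is in fact much closer to what is needed; the rerun-and-uniqueness strategy, as written, does not work.
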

\begin{proof}
 We need to come back to the equation. Without loss of generality we may assume $\frac{d}{2}<s<\frac{d+1}{2}$.
Observe that
\begin{align*}
    \partial(h,\pi)=S(M^{(3)}_{h_0,h}(h+h_0,\partial^3 h),N^{(2)j}_{h_0,h}(h+h_0,\partial^2\pi))+\text{controlled terms in } H^{2s-1-d/2}_{\rm loc}\times H^{2s-2-d/2}_{\rm loc}.
\end{align*}
Since $\|h+h_0\|_{\Hb^{s,\delta}}$ is small, we can upgrade the regularity to
\begin{align*}
    (h,\pi)\in  H^{2s-\frac{d}{2}}_{\rm loc}\times H^{2s-1-\frac{d}{2}}_{\rm loc}.
\end{align*}
Keep doing this we will get $( h,\pi)\in H^{d/2+2}_{\rm loc}\times H^{d/2+1}_{\rm loc}$.
Then the equation gives us
\begin{align*}
    \partial(h,\pi)=S(M^{(3)}_{h_0,h}(h+h_0,\partial^3 h),N^{(2)j}_{h_0,h}(h+h_0,\partial^2\pi))+\text{controlled terms in } H^{\frac{d}{2}+2}_{\rm loc}\times H^{\frac{d}{2}+1}_{\rm loc}.
\end{align*}
Since $\|h+h_0\|_{\Hb^{s,\delta}}$ is small, we get $(h,\pi)\in H^{\frac{d}{2}+3}_{\rm loc}\times H^{\frac{d}{2}+2}_{\rm loc}$. Iterating this we have $(h,\pi)\in C^\infty$.
\end{proof}
\subsection{Tail estimate of the solution}
Now we prove the last part of Theorem \ref{thm1}, namely the decay rate estimate \eqref{thm1:decay}. Roughly speaking, if the nonlinearity is integrable, then the decay rate comes from applying the solution operator to the nonlinearity, and should be the same as the decay rate of the solution operator. A priori we do not know whether the nonlinearity is integrable, but we can iterate a few times to get improved estimates.
\begin{prop}
Suppose $d\geq 3, s>\frac{d}{2}, -\frac{d}{2}<\delta<\frac{d}{2}-2$. Let $(g^{ij}-\delta^{ij},k^{ij})\in \Hb^{s,\delta}(\Omega)\times \Hb^{s-1,\delta+1}(\Omega)$ be a solution of equation \eqref{cons} obtained in the previous section, then for $l<s-d-2$, we have
\begin{align}\label{decay1}
    |\partial_l( g^{ij}(x)-\delta^{ij}(x))|\lesssim \langle x\rangle^{2-d-l},\quad |\partial_l k^{ij}(x)|\lesssim \langle x\rangle^{1-d-l}.
\end{align}
\end{prop}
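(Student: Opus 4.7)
The plan is to bootstrap pointwise decay from the fixed-point representation $(h,\pi) = S\Phi(h_0+h, \pi_0+\pi)$. Since $(h_0,\pi_0)$ has compact support, at large $|x|$ the unknown $(h,\pi)(x)$ is simply the convolution of the nonlinearity against the kernels $K_\chi$ and $L_{\chi,\omega}$ from Theorem \ref{thm:sol}, which are homogeneous of degrees $2-d$ and $1-d$ respectively. The target rates $\langle x\rangle^{2-d-l}$ and $\langle x\rangle^{1-d-l}$ are precisely what pointwise convolution against these kernels (and their $l$-th derivatives) yields whenever the source is integrable, so the whole problem reduces to showing that, after finitely many bootstraps, $\Phi$ decays strictly faster than $\langle x\rangle^{-d}$.

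I would begin with the baseline pointwise bound obtained by combining Proposition \ref{reg1} with the b-Sobolev embedding \eqref{eq:b-Sobolev embedding}: for $m<s-d/2$,
$$|\partial^m h(x)|\lesssim \langle x\rangle^{-d/2-\delta-m},\qquad |\partial^m \pi(x)|\lesssim\langle x\rangle^{-d/2-\delta-1-m}.$$
The quadratic structure of $\Phi$ in \eqref{eq:constraint-h}--\eqref{eq:constraint-pi}, modulated by smooth functions of $h$ controlled by Corollary \ref{cor:bilin_cor}, then yields $|\Phi(x)|\lesssim\langle x\rangle^{-2\sigma-2}$ whenever the current bound on $h$ is $\langle x\rangle^{-\sigma}$ and on $\pi$ is $\langle x\rangle^{-\sigma-1}$. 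The next step is to estimate the convolutions $(K_\chi*\Phi_1)(x)$ and $(L_{\chi,\omega}*\Phi_2)(x)$ pointwise using the three-region decomposition from the proof of Proposition \ref{prop:bound_sol} (near-diagonal $|y|\sim|x|$, outgoing $|y|\ll|x|$, incoming $|y|\gg|x|$). A direct computation shows that when $2\sigma+2>d$ all three regions are controlled by the kernel's homogeneity rate, producing the claimed optimal bounds; when $2\sigma+2\leq d$ the outgoing piece dominates and gives only the improved bound $\sigma\mapsto\min(d-2,\,2\sigma)$. Since $\sigma_0=d/2+\delta>0$ by hypothesis, the doubling iteration $\sigma_{n+1}=\min(d-2,2\sigma_n)$ reaches the threshold $d-2$ in finitely many steps, at which point the estimate saturates at the kernel rates $\langle x\rangle^{2-d}$ and $\langle x\rangle^{1-d}$.

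For the derivative bounds the same argument is applied with $\partial^l$ pulled inside the convolution, using that $\partial^l K_\chi$ and $\partial^l L_{\chi,\omega}$ are homogeneous of degrees $2-d-l$ and $1-d-l$ respectively; the restriction $l\leq s-d-2$ is exactly what is needed so that the derivatives $\partial^{l+2} h$ and $\partial^{l+1}\pi$ appearing in $M_h^{(2)},N_h^{(1)}$ still admit pointwise bounds via \eqref{eq:b-Sobolev embedding}. The main technical obstacle will be the careful three-region convolution estimate, in particular exploiting the cone-support of $K_\chi,L_{\chi,\omega}$ so that the convolution remains in the cone and no boundary contribution appears, and ruling out a logarithmic divergence at borderline values such as $2\sigma_n+2=d$ — this last issue is handled by perturbing $\sigma_n$ off the borderline so that one additional iteration closes the estimate, since any log loss is absorbed by the subsequent doubling.
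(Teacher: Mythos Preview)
Your proposal is correct and follows the same bootstrap-via-convolution strategy as the paper: start from the b-Sobolev embedding \eqref{eq:b-Sobolev embedding}, feed the resulting pointwise bounds into the quadratic nonlinearity $\Phi$, convolve against the homogeneous kernels $K_\chi,L_{\chi,\omega}$, and iterate until the kernel rates $\langle x\rangle^{2-d},\langle x\rangle^{1-d}$ are reached. The only differences are tactical: the paper uses the simpler two-region split $|x-y|\lessgtr|x|/2$ instead of the diagonal/incoming/outgoing decomposition you borrow from Proposition~\ref{prop:bound_sol}, and during the iteration the paper keeps the fixed b-Sobolev bound $|\partial^2 h|\lesssim\langle x\rangle^{-(d/2+\delta+2)}$ for the derivative factor in $M_h^{(2)}(h,\partial^2 h)$, which yields an additive gain $\sigma\mapsto\sigma+\tfrac{d}{2}+\delta$ per step rather than your doubling $\sigma\mapsto 2\sigma$ (the latter requires you to bootstrap $\partial h,\partial^2 h$ simultaneously at each step, which you should make explicit). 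Either scheme terminates in finitely many steps, and the derivative estimates for $l\geq 1$ are handled exactly as you describe.
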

\begin{proof}
We first consider the case $l=0$. By Proposition \ref{prop:b_Sobolev}, we have
\begin{align*}
    |h^{ij}(x)|\lesssim \langle x\rangle^{-(\frac{d}{2}+\delta)}, |\partial^2h^{ij}(x)|\lesssim \langle x\rangle^{-(\frac{d}{2}+\delta+2)}.
\end{align*}
Thus 
\begin{align*}
    |M^{(2)}_{h_0,h}(h_0+h,\partial^2(h_0+h))|\lesssim \langle x\rangle^{-(d+2+2\delta)}.
\end{align*}
The other terms are similar and we have $\Phi(h_0+h,\pi_0+\pi)=\mathcal{O}(\langle x\rangle^{-(d+2+2\delta)})$.
Now 
\begin{align*}
    h=K_\chi*\Phi_1(h_0+h,\pi_0+\pi)
\end{align*}
where the kernel $K=K_\chi$ is homogeneous of degree $2-d$, thus
\begin{align*}
    |h(x)|&=\left|\int K(x-y) \Phi_1(h_0+h,\pi_0+\pi)(y)dy\right|\\
    &\lesssim \int |x-y|^{2-d}\langle y\rangle^{-(d+2+2\delta)}dy\\
    &= \int_{|x-y|<|x|/2}|x-y|^{2-d}\langle y\rangle^{-(d+2+2\delta)}dy+\int_{|x-y|\geq|x|/2}|x-y|^{2-d}\langle y\rangle^{-(d+2+2\delta)}dy.
\end{align*}
If $2+2\delta>0$, then $|h(x)|\lesssim\langle x\rangle^{2-d}$. Otherwise we can only conclude $|h(x)|\lesssim \langle x\rangle^{-(d+2\delta)}$. But we can iterate this process and still conclude
\begin{align*}
    |h(x)|\lesssim \langle x\rangle^{2-d},\quad
    |\pi(x)|\lesssim \langle x\rangle^{1-d}.
\end{align*}
The case $l=1,2$ can be obtained similarly.

Now for general $3\leq l\leq s-d-2$, let us suppose \eqref{decay1} is true for up to $l-1$, and prove for $l$. Recall
\begin{align*}
    |\partial^j h(x)|\lesssim \langle x\rangle^{-(\frac{d}{2}+\delta+j)},
\end{align*}
the induction hypothesis gives
\begin{align*}
    |\partial^l\Phi_1(h_0+h,\pi_0+\pi)|\lesssim \langle x\rangle^{-(3d/2+\delta+l)}.
\end{align*}
Recall $\partial^l h=S\partial^l\Phi_1(h_0+h,\pi_0+\pi)$, so
\begin{align*}
    |\partial^l h(x)|&\lesssim \int_{|x-y|<\frac{|x|}{2}}|x-y|^{2-d}\langle y\rangle^{-(\frac{3d}{2}+\delta+l)}dy+\int_{|x-y|>\frac{|x|}{2}}|x-y|^{2-d-l}\langle y\rangle^{-(3d/2+\delta)}dy\\
    &\lesssim \langle x\rangle^{2-d-l}.
\end{align*}
Similarly we have $|\partial^l\pi(x)|\lesssim \langle x\rangle^{1-d-l}$.
\end{proof}

\subsection{Gluing construction of the solution}
In this section we provide the proof of the gluing result in Theorem \ref{thm1’}.

Let $\Omega\subset \Omega' $ be two cones, after cutting off we only need to solve the constraint equation
\begin{align*}
    P(\chi h_0+h,\chi \pi_0+\pi)=\Phi(\chi h_0+h,\chi \pi_0+\pi)
\end{align*}
inside the region $(\Omega'\cup B_1(y))\setminus (\Omega\cup B_{1/2}(y))$. By choosing $|y|\gg$ 1, we may assume $\|(\chi h_0,\chi \pi_0)\|_{\Hb^{s,\delta}\times \Hb^{s-1,\delta+1}}$ is sufficiently small, and then we can apply the following solution operator to get a solution.
\begin{prop}
Let $\Omega_{\rm int}:=(\Omega'\cup B_1(y))\setminus(\Omega\cup B_{1/2}(y))$, then there is a solution operator
\begin{align*}
    S_{\rm int}:\Hb^{s-2,\delta+2}(\Omega_{\rm int})\to \Hb^{s,\delta}(\Omega_{\rm int})\times \Hb^{s-1,\delta+1}(\Omega_{\rm int}).
\end{align*}
\end{prop}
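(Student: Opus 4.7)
The plan is to adapt the explicit solution operators $K_\chi$ and $L_{\chi, w}$ from Theorem~\ref{thm:sol} to the geometry of $\Omega_{\rm int}$. Since $\Omega_{\rm int}$ is an annular region wedged between the narrow cone $\Omega$ (with half-angle $\theta_0$) and the wide cone $\Omega'$ (with half-angle $\theta > \theta_0$) centered at $y$, with the spherical shell $B_1(y) \setminus B_{1/2}(y)$ attached at the apex, the key is to choose the angular cutoff $\chi$ so that the kernel's direction cone lies strictly between the two cone walls.

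Concretely, I would fix a direction $\omega_* \in \mathbb{S}^{d-1}$ with $\theta_0 < \angle(\omega_*, \omega) < \theta$ and take $\chi \in C^\infty(\mathbb{S}^{d-1})$ a smooth bump supported in a sufficiently small spherical neighborhood of $\omega_*$ so that the generated cone $C = \overline{\{x : x/|x| \in \supp \chi\}}$ is convex, normalized by $\int_{\mathbb{S}^{d-1}} \chi = 1$. The construction of Theorem~\ref{thm:sol} then yields kernels $K_\chi$ (homogeneous of degree $2-d$) and $L_{\chi, w}$ (homogeneous of degree $1-d$), both supported in $C$. I would define $S_{\rm int}$ by setting $S_{\rm int} f = (K_\chi * f_1, L_{\chi, w^j} * f_2^j)$ for the two components $(f_1, f_2)$ of the source, exactly as in the proof of Theorem~\ref{thm1} but with the $\chi$ tailored to $\Omega_{\rm int}$. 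The boundedness $\Hb^{s-2, \delta+2}(\Omega_{\rm int}) \to \Hb^{s, \delta}(\Omega_{\rm int}) \times \Hb^{s-1, \delta+1}(\Omega_{\rm int})$ then follows from Proposition~\ref{prop:bound_sol} applied to the two homogeneity orders $k=2$ and $k=1$, using the weight range $-d/2 < \delta < d/2 - 2$.

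The main obstacle is verifying the support condition $\supp(S_{\rm int} f) \subset \overline{\Omega_{\rm int}}$ whenever $\supp f \subset \Omega_{\rm int}$, which reduces to showing $\Omega_{\rm int} + C \subset \overline{\Omega_{\rm int}}$. For $x$ in the cone annulus $\Omega' \setminus \Omega$ — the bulk of $\Omega_{\rm int}$ — the translate $x+v$ with $v \in C$ stays in the cone annulus by three facts: convexity of the direction cone of $\Omega'$ (so $x+v \in \Omega'$); the strict angular gap $\theta_0 < \angle(\omega_*, \omega)$ together with the convexity of angular averages (so $x+v \notin \Omega$); and that translation in a direction with positive $\omega$-component increases $|\cdot - y|$ (so $x+v \notin B_{1/2}(y)$). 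The delicate case is when $x$ lies in the spherical annulus $B_1(y) \setminus B_{1/2}(y)$ on the side of $y$ opposite $\omega$, where a short outward translation might briefly exit both $\Omega'$ and $B_1(y)$. This can be handled by combining the outgoing kernel above with a complementary incoming kernel (supported in direction $-\omega_*$) via a partition of unity, invoking the Corollary after Proposition~\ref{prop:bound_sol} for boundedness of the incoming piece on bounded regions; the two pieces together yield the desired $S_{\rm int}$.
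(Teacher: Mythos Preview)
Your single-direction approach has a genuine geometric gap: the claimed inclusion $\Omega_{\rm int} + C \subset \overline{\Omega_{\rm int}}$ fails already on the bulk conical shell $\Omega' \setminus \Omega$, before one ever reaches the spherical annulus you flag as delicate. Take $y=0$, $\omega=e_d$, and your $\omega_*$ with $\theta_0 < \angle(\omega_*,\omega) < \theta$. If $x$ lies in the conical shell at the \emph{antipodal} angular position (i.e.\ the reflection of $\omega_*$ across the axis $\omega$), then translating $x$ by a multiple of $\omega_*$ rotates it \emph{toward} the axis, and for a suitable translation length the result lands squarely inside the inner cone $\Omega$. Concretely, with $x=r(-\sin\alpha,0,\dots,0,\cos\alpha)$ and $\omega_*=(\sin\alpha,0,\dots,0,\cos\alpha)$ for $\alpha\in(\theta_0,\theta)$, one has $x + r\omega_* = (0,\dots,0,2r\cos\alpha) \in \Omega$. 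So no single convex direction cone $C$ can preserve the non-convex annular region $\Omega'\setminus\Omega$, and your argument that $x+v\notin\Omega$ ``by the strict angular gap'' is simply false on the far side of the shell. The incoming/outgoing patch you propose at the end does not cure this, since $-C$ has the identical defect from the opposite antipode.

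The paper handles this by two devices you are missing. First, it covers the angular cross-section $\Omega_{\rm int}\cap\mathbb{S}^{d-1}_y$ by small patches $U_i$, each star-shaped with respect to a subpatch $V_i\subset U_i$, and uses a \emph{separate} solution operator $S_{\chi_i}$ (with $\supp\chi_i\subset V_i$) on each piece via a partition of unity $\tilde\chi_j$; this localizes the support propagation so that each piece stays within its own angular sector of the shell. Second, for data supported near the apex (inside $B_2(y)$), it first applies a compactly supported Bogovskii-type operator $S_0$ to push the support out into the conical region before applying the angular operators, setting $S_{\rm int}f = S_1(f - PS_0(\chi_0 f)) + S_0(\chi_0 f)$. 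Both ingredients are essential; your construction supplies neither.
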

\begin{proof}
Let $f\in \Hb^{s-2,\delta+2}(\Omega_{\rm int})$. First we can move the support outside $B_2(y)$ using the explicit Bogovskii-type solution operator $S_0$ constructed in \cite{maoohtao2022}. Let $\chi_0\in C_0^\infty(B_2(y))$, then
\begin{itemize}
    \item $PS_0(\chi_0f)=\chi_0f$ inside $B_2(y)$;
    \item $\supp S_0(\chi_0f)\subset B_3(y)$;
    \item $S_0(\chi_0f)\in \Hb^{s,\delta}(\Omega_{\rm int})\times \Hb^{s-1,\delta+1}(\Omega_{\rm int})$.
\end{itemize}

Now we make a partition of unity in angular variables and use the solution operator on each piece. Suppose $\Omega_{\rm int}\cap \mathbb{S}^{d-1}_y=\cup U_i$ and each $U_i$ is star-shaped with respect to an open subset $V_i\subset U_i$. Now let $\chi_i\in C_0^\infty(V_i)$ be a cutoff function supported in $V_i$, then as in Theorem \ref{thm:sol} we have solution operators $S_{\chi_i}$ with respect to $U_i$ so that $\supp u\subset y+\RR_{>1}(U_i-y)$ implies $\supp S_{\chi_i}u\subset y+\RR_{>1}(U_i-y)$. We take a partition of unity $\tilde{\chi}_j$ with respect to the covering $\{U_i\}$ and define
\begin{align*}
    S_1=\sum\limits_j S_{\chi_j} \tilde{\chi}_j.
\end{align*}
The final solution operator $S_{\rm int}$ is defined to be
\begin{align*}
    S_{\rm int}f=S_1(f-PS_0(\chi_0f)) + S_0(\chi_0f)\in \Hb^{s,\delta}(\Omega_{\rm int})\times \Hb^{s-1,\delta+1}(\Omega_{\rm int}).
\end{align*}
One can check it is bounded $H^{s-2}_{\delta+2}(\Omega_{\rm int})\to H^s_\delta(\Omega_{\rm int})\times H^{s-1}_{\delta+1}(\Omega_{\rm int})$ and
\begin{equation*}
     PS_{\rm int}f=PS_1(f-PS_0(\chi_0f)) + PS_0(\chi_0f)=f-PS_0(\chi_0f) + PS_0(\chi_0f)=f.\qedhere
\end{equation*}

\end{proof}

\begin{rem}
    The norm of $\Hb^{s,\delta}(\Omega_{\rm int})$ should be defined with respect to the center $y$, to make uniform estimates in $y$.
\end{rem}
Following the procedure of the previous section, we can get a gluing solution from the solution operator $S_{\rm int}$. The proof for the smoothness and decay rate is identical to the previous argument.

\section{Solving the problem in a degenerate sector}\label{sec:Omega}
In this section we want to find solutions of the Einstein constraint equations in the degenerate sector $\{(x',x_d)\in\mathbb{R}^d\,:\,  |x'|\leq x_d^\alpha\}$ for some $\alpha<1$. For technical convenience, we define
\begin{align*}
    \Omega=\{(x',x_d)\in\mathbb{R}^d\,:\,  |x'|\leq x_d^\alpha:x_d\geq 1\}
\end{align*}
and indeed construct the solution in $\Omega$.

In order to analyze the regularity of the solution, we need an anisotropic weighted Sobolev space defined as follows. For simplicity, we will only consider integer $s\in\NN$ in this section.
\begin{defi}
Let $\alpha\in (0,1]$, $s\in\mathbb{N}$ be a non-negative integer, for $u\in C_0^\infty(\Omega)$,
\begin{align*}
    \|u\|_{H^{s,\delta}_{\alpha}}^2:=\sum\limits_{|\beta|\leq s}  \|\langle x\rangle^{|\beta'|\alpha+\beta_d+\delta} \partial_{x'}^{\beta'}\partial_{x_d}^{\beta_d} u\|_{L^2}^2.
\end{align*}
\end{defi}

We provide a few properties of this norm. 
\begin{prop}
\begin{itemize}
Let $u\in C_0^\infty(\Omega)$, then
    \item For $s>d/2$, $\|\langle x\rangle^{\frac{(d-1)\alpha+1}{2}+\delta}u\|_{L^\infty}\lesssim \|u\|_{H^{s,\delta}_{\alpha}}$. More generally, for $s>d/2-d/p$, $\|\langle x\rangle^{((d-1)\alpha+1)(\frac{1}{2}-\frac{1}{p})+\delta}u\|_{L^p}\lesssim \|u\|_{H^{s,\delta}_{\alpha}}$.
    \item For $s>d/2$, $(g,h)\mapsto gh$ is is continuous $H^{s,\delta_1}_{\alpha}\times H^{s,\delta_2}_{\alpha}\to H^{s,\delta_1+
    \delta_2+\frac{(d-1)\alpha+1}{2}}_{\alpha}$
\end{itemize}
\end{prop}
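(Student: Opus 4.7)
The plan is to establish an anisotropic Littlewood--Paley characterization of $H^{s,\delta}_\alpha$ in the spirit of Proposition \ref{prop:b_Sobolev}, and then reduce both estimates to their classical Euclidean counterparts on the unit scale via rescaling. Cover $\Omega$ by anisotropic dyadic shells $\mathcal{D}_j=\{x_d\sim 2^j,\, |x'|\lesssim 2^{j\alpha}\}$ for $j\geq 0$, and introduce the rescaling map
\begin{align*}
\Phi_j:B(0,2)\times(1/2,2)\to \mathcal{D}_j,\quad (y',y_d)\mapsto(2^{j\alpha}y',\,2^jy_d).
\end{align*}
The chain rule gives $\partial_{y'}^{\beta'}\partial_{y_d}^{\beta_d}(u\circ\Phi_j)=2^{-j(|\beta'|\alpha+\beta_d)}(\partial_{x'}^{\beta'}\partial_{x_d}^{\beta_d} u)\circ\Phi_j$ and $|\det\Phi_j'|\approx 2^{j((d-1)\alpha+1)}$. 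Choosing an anisotropic partition of unity $\{\chi_j\}$ subordinate to $\{\mathcal{D}_j\}$ (whose $x'$- and $x_d$-derivatives scale like $2^{-j\alpha}$ and $2^{-j}$ respectively, matching the weights in the $H^{s,\delta}_\alpha$ norm), an induction on $s\in\NN$ using the Leibniz rule yields the equivalence
\begin{align*}
\|u\|_{H^{s,\delta}_\alpha}^2 \approx \sum_{j\geq 0} 2^{j(2\delta+(d-1)\alpha+1)}\,\|\Phi_j^*(\chi_j u)\|_{H^s(\RR^d)}^2.
\end{align*}

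Granting this characterization, the $L^p$ embedding follows from the change-of-variables identity $\|u\|_{L^p(\mathcal{D}_j)}^p = 2^{j((d-1)\alpha+1)}\|\Phi_j^*(\chi_j u)\|_{L^p}^p$, which (using $\langle x\rangle\approx 2^j$ on $\mathcal{D}_j$) rearranges algebraically to
\begin{align*}
\|\langle x\rangle^{((d-1)\alpha+1)(\tfrac12-\tfrac1p)+\delta}u\|_{L^p}^p \approx \sum_{j\geq 0} 2^{jp[\frac{(d-1)\alpha+1}{2}+\delta]}\,\|\Phi_j^*(\chi_j u)\|_{L^p}^p.
\end{align*}
The classical Sobolev embedding $H^s(\RR^d)\hookrightarrow L^p(\RR^d)$ (for $s>d/2-d/p$) applied on the unit scale, combined with the inclusion $\ell^2\hookrightarrow\ell^p$ for $p\geq 2$ (or a supremum-by-$\ell^2$ bound when $p=\infty$), upgrades the right-hand side to $\|u\|_{H^{s,\delta}_\alpha}^p$, giving the claimed bound.

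For the bilinear estimate, the norm characterization combined with the standard algebra property of $H^s(\RR^d)$ for $s>d/2$ gives
\begin{align*}
\|gh\|_{H^{s,\delta_1+\delta_2+\frac{(d-1)\alpha+1}{2}}_\alpha}^2 &\lesssim \sum_{j\geq 0} 2^{j(2\delta_1+2\delta_2+2((d-1)\alpha+1))}\|\Phi_j^*(\tilde\chi_j g)\|_{H^s}^2 \|\Phi_j^*(\tilde\chi_j h)\|_{H^s}^2,
\end{align*}
where $\tilde\chi_j$ are slightly widened cutoffs with $\tilde\chi_j\equiv 1$ on $\supp\chi_j$. Estimating the factor carrying $g$ by its $\ell^\infty_j$-norm (controlled by the $\ell^2_j$-norm) while retaining the $\ell^2_j$ sum for $h$ separates the bound into $\|g\|_{H^{s,\delta_1}_\alpha}^2\|h\|_{H^{s,\delta_2}_\alpha}^2$, as required.

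The main technical obstacle is the norm equivalence itself: unlike the isotropic $\bm$-case of Proposition \ref{prop:b_Sobolev}, the anisotropic scaling does not arise from a global group action on a manifold with boundary, so one must verify by hand that the partition-of-unity derivatives carry exactly the anisotropic weights $\langle x\rangle^{-|\beta'|\alpha-\beta_d}$ assigned to $\partial_{x'}^{\beta'}\partial_{x_d}^{\beta_d}$ in the norm. Once this induction on $s\in\NN$ is executed, the two stated estimates become essentially routine transcriptions of their $H^s(\RR^d)$ analogues, with the appropriate bookkeeping of the Jacobian weight $2^{j((d-1)\alpha+1)}$ and of the $x'$- and $x_d$-derivative scalings.
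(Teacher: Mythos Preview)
Your approach is correct (modulo a harmless sign typo in the chain rule: with $\Phi_j(y)=(2^{j\alpha}y',2^jy_d)$ one has $\partial_{y'}^{\beta'}\partial_{y_d}^{\beta_d}(u\circ\Phi_j)=2^{+j(|\beta'|\alpha+\beta_d)}(\partial_{x}^{\beta}u)\circ\Phi_j$; your stated norm equivalence is nevertheless right). It is, however, a genuinely different route from the paper's. The paper does not set up any Littlewood--Paley characterization of $H^{s,\delta}_\alpha$: for the embedding it simply asserts ``follows from rescaling,'' and for the product it expands $\partial^{\beta}(gh)$ by Leibniz, applies H\"older with exponents $p_1,p_2$ chosen so that $s-|\beta|>d/2-d/p_1$ and $s-|\gamma|>d/2-d/p_2$, and then invokes the $L^p$ embedding from the first part term by term. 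Your method invests more up front---proving the dyadic norm equivalence---but then both items become one-line transcriptions of the unit-scale algebra property and Sobolev embedding, exactly parallel to the paper's treatment of $\Hb^{s,\delta}$ in Proposition~\ref{prop:b_Sobolev}. The paper's method is shorter here because it avoids that investment, at the cost of having to track Leibniz indices and H\"older exponents by hand; your method buys a reusable tool that would streamline later estimates in the degenerate sector as well.
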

\begin{proof}
The first estimate follows from rescaling. The bilinear estimate is also direct by choosing $p_1,p_2$ according to $\beta$ for H\"older inequality and use the first Sobolev embedding estimate:
\begin{align*}
    &\|gh\|_{H^{s,\delta_1+\delta_2+\frac{(d-1)\alpha+1}{2}}_{\alpha}}^2=\sum\limits_{|\beta|\leq s}  \|\langle x\rangle^{|\beta'|\alpha+\beta_d+\delta_1+\delta_2+\frac{(d-1)\alpha+1}{2}} \partial_{x'}^{\beta'}\partial_{x_d}^{\beta_d} (gh)\|_{L^2}^2\\
    &\lesssim \sum\limits_{|\beta|+|\gamma|\leq s} \|\langle x\rangle^{|\beta'|\alpha+\beta_d+\delta_1+\frac{(d-1)\alpha+1}{p_2}} \partial_{x'}^{\beta'}\partial_{x_d}^{\beta_d} g\|_{L^{p_1}}^2\|\langle x\rangle^{|\gamma'|\alpha+\gamma_d+\delta_2+\frac{(d-1)\alpha+1}{p_1}} \partial_{x'}^{\gamma'}\partial_{x_d}^{\gamma_d} h\|_{L^{p_2}}^2\\
    &\lesssim \|g\|_{H^{s,\delta_1}_{\alpha}}^2\|h\|_{H^{s,\delta_2}_{\alpha}}^2.
\end{align*}
In the second last step we choose $1/p_1+1/p_2=1/2$ such that $s-|\beta|>d/2-d/p_1$ and $s-|\gamma|>d/2-d/p_2$ (except for the end point case $|\gamma|=s$ or $|\beta|=s$ which is clear from Sobolev embedding).
\end{proof}

Next, we turn to the construction of a solution operator for the double divergence equation $\partial_i\partial_j h^{ij}=f$. We start with the divergence equation $\partial_j v^j=f$ as before.
The key is finding fundamental solutions of the divergence equation supported on a half-curve.
\begin{prop}
For any smooth curve $\gamma:[0,\infty)\to \RR^d$ with $\gamma(0)=y$ and $\lim\limits_{t\to\infty}\gamma(t)=\infty$, the distribution $\delta_\gamma$ defined as
\begin{align*}
    (\delta_\gamma,\varphi)=\int_0^\infty \varphi(\gamma(t))\gamma'(t)dt
\end{align*}
is a fundamental solution for the divergence equation
\begin{align*}
    \partial_j u^j=\delta_y.
\end{align*}
\end{prop}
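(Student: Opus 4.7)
The plan is to verify the identity $\partial_j (\delta_\gamma)^j = \delta_y$ directly, by testing against an arbitrary $\varphi\in C_0^\infty(\RR^d)$ and using the chain rule together with the fundamental theorem of calculus. The key observation is that $t\mapsto \varphi(\gamma(t))$ is eventually zero, because $\varphi$ has compact support while $\gamma(t)\to\infty$, so we may treat the $t$-integral as an integral over a compact interval and freely integrate by parts.

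Concretely, I would first confirm that $\delta_\gamma$ is a well-defined vector-valued distribution: since $\varphi$ is compactly supported and $\gamma$ is continuous with $\gamma(t)\to\infty$, there is some $T=T(\varphi)>0$ beyond which $\gamma(t)\notin\supp\varphi$, so the defining integral reduces to a finite Riemann integral. Then by definition of distributional divergence and the definition of $\delta_\gamma$,
\begin{align*}
    \langle\partial_j (\delta_\gamma)^j,\varphi\rangle
    = -\langle (\delta_\gamma)^j,\partial_j\varphi\rangle
    = -\int_0^\infty (\partial_j\varphi)(\gamma(t))\,(\gamma^j)'(t)\,dt.
\end{align*}
By the chain rule, the integrand is exactly $\frac{d}{dt}\bigl(\varphi\circ\gamma\bigr)(t)$, so the right-hand side equals $-\bigl[\varphi(\gamma(t))\bigr]_0^\infty = -(0-\varphi(y)) = \varphi(y) = \langle\delta_y,\varphi\rangle$, as desired.

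There is essentially no obstacle: the only point to be careful about is justifying the vanishing of the boundary term at $t=\infty$, which follows from $\varphi\in C_0^\infty$ and $\lim_{t\to\infty}\gamma(t)=\infty$, and the evaluation at $t=0$, which uses the hypothesis $\gamma(0)=y$. The smoothness hypothesis on $\gamma$ is only needed to make $\gamma'(t)$ well-defined so that the chain rule applies classically; in fact the computation works as long as $\gamma$ is absolutely continuous. This also makes clear why one obtains a fundamental solution supported exactly on the image curve $\gamma([0,\infty))$, which is the geometric feature used later to build solution operators adapted to the degenerate sector $\Omega$.
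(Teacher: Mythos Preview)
Your proof is correct and is essentially identical to the paper's own argument: both test against $\varphi\in C_0^\infty(\RR^d)$, recognize the integrand as $\frac{d}{dt}\varphi(\gamma(t))$ via the chain rule, and conclude by the fundamental theorem of calculus using $\gamma(0)=y$ and the vanishing at infinity from compact support of $\varphi$.
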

\begin{proof}
Let $\varphi\in C_0^\infty(\RR^d)$, then
\begin{equation*}
    (\partial_j\delta_{\gamma}^j,\varphi)=-\int_0^\infty \partial_j\varphi(\gamma(t))\gamma'^j(t)dt=-\int_0^\infty \partial_t \varphi(\gamma(t)) dt=\varphi(\gamma(0))=\varphi(y).\qedhere
\end{equation*}
\end{proof}

As before, we average a family of fundamental solutions to get a more regular solution supported in $\Omega$.
\begin{lem}\label{lem:deg_div} Suppose $\delta<\frac{\alpha(d-1)-1}{2}$.
    We have a solution operator $\Tilde{S}_0:H^{s-1,\delta+1}_{\alpha}(\Omega)\to H^{s,\delta}_{\alpha}(\Omega)$ for the divergence equation, i.e.
    $\partial_j \tilde{S}^j_0 f=f.$
\end{lem}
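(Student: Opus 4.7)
The plan is to adapt the averaging construction from Theorem \ref{thm:sol} to the anisotropic geometry of the degenerate sector. For each $y \in \Omega$, I will choose a smooth family of curves $\gamma_{y,z} : [0,\infty) \to \Omega$, indexed by $z$ in the unit ball $B \subset \RR^{d-1}$, each issuing from $y$ and escaping to infinity in the $x_d$-direction while respecting the degenerate width $|x'| \leq x_d^\alpha$. A convenient choice is
\begin{align*}
    \gamma_{y,z}(t) = \bigl(y' + z\bigl((y_d+t)^\alpha - y_d^\alpha\bigr),\ y_d + t\bigr),
\end{align*}
which lies in $\Omega$ whenever $|z| \leq 1$, since $|x'(t)| \leq |y'| + |z|((y_d+t)^\alpha - y_d^\alpha) \leq (y_d+t)^\alpha = x_d(t)^\alpha$. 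By the preceding proposition, each $\delta_{\gamma_{y,z}}$ is a fundamental solution of $\partial_j$ with source $\delta_y$.

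Pick $\chi \in C_c^\infty(B)$ with $\int \chi = 1$ and define $\tilde{S}_0$ by the Schwartz kernel
\begin{align*}
    K^j(x,y) = \int_B \chi(z)\, \delta_{\gamma_{y,z}}^j(x)\, dz.
\end{align*}
Solving $\gamma_{y,z}(t) = x$ explicitly ($t = x_d - y_d$ and $z = (x'-y')/(x_d^\alpha - y_d^\alpha)$ in the region $x_d > y_d$) and computing the Jacobian $(x_d^\alpha - y_d^\alpha)^{d-1}$ yields an explicit kernel of the form
\begin{align*}
    K^j(x,y) = \frac{\chi\bigl(\tfrac{x'-y'}{x_d^\alpha-y_d^\alpha}\bigr)}{(x_d^\alpha - y_d^\alpha)^{d-1}}\, v^j(x,y),
\end{align*}
where $v$ is the tangent vector of $\gamma$ at the hit point. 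This kernel is smooth off the diagonal, supported in $\{x_d > y_d,\ |x'-y'| \lesssim x_d^\alpha - y_d^\alpha\}$, and homogeneous of the appropriate order under the anisotropic dilation $(x',x_d) \mapsto (\lambda^\alpha x', \lambda x_d)$. Crucially, $K$ is purely outgoing: $K^j(x,y) = 0$ whenever $x_d \leq y_d$.

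To prove $\tilde{S}_0 : H_\alpha^{s-1,\delta+1}(\Omega) \to H_\alpha^{s,\delta}(\Omega)$, I would follow the blueprint of Proposition \ref{prop:bound_sol} with an anisotropic Littlewood--Paley decomposition at scales $\mathcal{D}_j = \{x_d \in [2^{j-2}, 2^{j+2}]\} \cap \Omega$, whose anisotropic volume is $\sim 2^{j((d-1)\alpha + 1)}$. Split $K$ into a diagonal piece (both arguments in comparable shells) and an outgoing piece ($x_d \gg y_d$); by the support property there is no incoming piece to worry about. The diagonal estimate reduces by the anisotropic rescaling $(x',x_d) \mapsto (2^{j\alpha}x', 2^j x_d)$ to a fixed-scale bound, using homogeneity of $K$. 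The outgoing piece produces a geometric series which converges precisely when $2(\delta+1) < (d-1)\alpha + 1$, i.e.\ $\delta < \frac{\alpha(d-1)-1}{2}$, matching the stated hypothesis.

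The main technical obstacle I anticipate is the anisotropic bookkeeping: the norm $H_\alpha^{s,\delta}$ weights $\partial_{x'}^{\beta'}\partial_{x_d}^{\beta_d}$ by $\langle x\rangle^{|\beta'|\alpha + \beta_d + \delta}$, so derivatives of $K$ in the $x'$ variable gain a factor $x_d^{-\alpha}$ while $x_d$-derivatives gain $x_d^{-1}$, and these must be paired consistently with the Sobolev weights under the anisotropic dilation. Once the shell decomposition and commutator bounds are set up correctly, the estimates proceed in direct parallel with Proposition \ref{prop:bound_sol}, with the isotropic dimension $d$ replaced throughout by the effective anisotropic dimension $(d-1)\alpha + 1$.
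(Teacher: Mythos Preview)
Your strategy is sound and closely parallels the paper's, but with one structural difference worth noting. The paper builds the solution operator in \emph{two} stages: first it averages over straight rays $\gamma^{(1)}_{y,\omega}=y+(\omega y_d^\alpha,y_d)t$ to obtain a kernel $K_1$ that is a clean pseudodifferential operator of order $-1$ near the diagonal, cuts it off at $|x_d-y_d|\lesssim y_d$ (since the straight rays eventually leave $\Omega$), and then corrects the cutoff error using a second family of curved rays $\gamma^{(2)}_{y,\omega}(t)=(y'+\omega((1+t)^\alpha-1),y_d+t)$ that govern only the far region. You instead take a single family of curves $\gamma_{y,z}(t)=(y'+z((y_d+t)^\alpha-y_d^\alpha),y_d+t)$ that stay in $\Omega$ for all $t\geq 0$ and are exactly invariant under the anisotropic dilation, so no gluing step is needed. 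This is a legitimate and arguably cleaner variant: your kernel has the same component-wise anisotropic homogeneity as the paper's $\tilde K_1$ (the rescaling identity you invoke is exactly the paper's $\tilde K_1(R^\alpha x',Rx_d,R^\alpha y',Ry_d)=R^{-\alpha(d-1)-1}(R^\alpha K_s,RK_b)$), and the outgoing dyadic sum produces the same threshold $2(\delta+1)<(d-1)\alpha+1$.

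The one point you should not leave implicit is the fixed-scale bound after rescaling to $R=1$. The paper's reason for using straight rays near the diagonal is that the resulting kernel has an explicit homogeneous symbol, so the $H^{s-1}\to H^s$ gain is immediate from standard $\Psi$DO calculus. Your curved kernel has $x_d^\alpha-y_d^\alpha$ in place of $x_d-y_d$, which at unit scale is a smooth, nonvanishing multiple of $x_d-y_d$; this means your kernel is still a pseudodifferential operator of order $-1$, but you have to say why (e.g.\ via the diffeomorphism $\tau=x_d^\alpha-y_d^\alpha$ in the fibre, or by observing that the curvature contributes only lower-order terms to the symbol). Once that is spelled out, your argument is complete and equivalent in strength to the paper's.
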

\begin{proof}
We construct the solution operator in two steps.

Let $\gamma^{(1)}_{y,\omega}=y+(\omega y_d^\alpha,y_d)t$, $\chi_1\in C^\infty_0(\mathbb{R}^{d-1})$, and define
\begin{align*}
    K_1=\int_{\mathbb{R}^{d-1}}\chi_1(\omega)\delta_{\gamma^{(1)}_{y,\omega}}d\omega=\chi_1\left(\frac{(x'-y')/y_d^\alpha}{(x_d-y_d)/y_d}\right)\frac{(x-y)}{y_d^{\alpha(d-1)+1}|\frac{x_d-y_d}{y_d}|^d}.
\end{align*}
Then $\Div K_1=\delta(x-y)$. Let $\chi_2$ be a cutoff and 
\begin{align*}
    \tilde{K}_1=\chi_2\left(\frac{x_d-y_d}{y_d}\right)K_1.
\end{align*}
Then $\Div\tilde{K}_1 f = f + \displaystyle\frac{1}{y_d}\chi_2'\left(\frac{x_d-y_d}{y_d}\right)K_1$. Let $\gamma^{(2)}_{y,\omega}(t)= (y'+\omega ((1+t)^\alpha-1),y_d+ t)$ and 
\begin{align*}
    K_2&=\int_{\mathbb{S}^{d-1}}\chi_1(\omega)\delta_{\gamma^{(2)}_{y,\omega}}d\omega\\
    &=\chi_1\left(\frac{x'-y'}{(1+x_d-y_d)^\alpha-1}\right)((1+x_d-y_d)^\alpha-1)^{-(d-1)}\left(\alpha(x'-y')\frac{(1+x_d-y_d)^{\alpha-1}}{(1+x_d-y_d)^\alpha-1},1\right).
\end{align*}
Then $K(x,y)=\tilde{K}_1(x,y)-\displaystyle\int K_2(x,z)\frac{1}{y_d}\chi_2'\left(\frac{z_d-y_d}{y_d}\right)K_1(z,y)dz$ is a solution operator of the divergence equation. Let $\tilde{K}_2(x,y)=-\displaystyle\int K_2(x,z)\frac{1}{y_d}\chi_2'\left(\frac{z_d-y_d}{y_d}\right)K_1(z,y)dz$, it is straightforward to verify
\begin{itemize}
    \item $K$ is outgoing;
    \item $\partial_{x'}^{\beta'}\partial_{x_d}^{\beta_d} \tilde{K}_2(x,y)\lesssim |x_d-y_d|^{-\alpha(d-1)-|\beta'|\alpha-\beta_d}$;
\end{itemize}
We need to estimate the solution operator in both regions.
\begin{itemize}
    \item In the nearby region (i.e. $x\sim y$), we would like to show
    \begin{align}\label{degsol_bound_near}
        \left\|\int \tilde{K}_1(x,y) u(y)dy\right\|_{H^{s,\delta}_{\alpha}}\lesssim \|u\|_{H^{s-1,\delta+1}_{\alpha}}.
    \end{align}
    It suffices to prove it in a fixed annulus $A_R:=\{R<|y_d|<2R\}$. We claim by rescaling, we may assume $R=1$ without loss of generality.
    Let $\tilde{K}_1(x,y)=(K_{s}(x,y), K_b(x,y))$, we observe that
    \begin{align*}
        \tilde{K}_1(R^\alpha x',Rx_d, R^\alpha y', R y_d)=R^{-\alpha(d-1)-1}(R^\alpha K_s(x,y),R K_b(x,y)),
    \end{align*}
    so given the estimate \eqref{degsol_bound_near} for $R=1$ we get
    \begin{align*}
        &\left\|\int \tilde{K}_1(x,y) u(y)dy\right\|_{H^{s,\delta}_{\alpha}(A_R)}^2\\
        &\lesssim  R^{\alpha(d-1)+1}R^{2\delta}\left\|R^{\alpha(d-1)+1}\int \tilde{K}_1(R^\alpha x', Rx_d,R^\alpha y', R y_d) u(R^\alpha y', Ry_d)dy\right\|_{H^s(A_1)}^2\\
        &\lesssim R^{\alpha(d-1)+1}R^{2+2\delta}\left\|\int  \tilde{K}_1(x,y) u(R^\alpha y', Ry_d)dy\right\|_{H^s(A_1)}^2\\
        &\lesssim R^{\alpha(d-1)+1}R^{2+2\delta}\left\|u(R^\alpha y', Ry_d)\right\|_{H^{s-1}(A_1)}^2\\
        &\lesssim \|u\|_{H^{s-1,\delta+1}_{\alpha}(A_R)}^2.
    \end{align*}
    On the unit annulus, we notice $\tilde{K}_1(x,y)$ gives a pseudodifferential operator of order $-1$. If we let $b(y_d,z)=\displaystyle\chi_1\left(\frac{z'/y_d^\alpha}{z_d/y_d}\right)\frac{(z'/y_d^\alpha, z_d/y_d)}{y_d^{\alpha(d-1)+1}|\frac{z_d}{y_d}|^d}$ and $a(\xi)=\mathcal{F}(b(1,\cdot))$, then $a$ is homogeneous of degree $-1$ and
    \begin{align*}
        (y_d^{-\alpha}K_s(x,y), y_d^{-1}K_b(x,y))= \frac{1}{(2\pi)^d}\chi_2\left(\frac{x_d-y_d}{y_d}\right)\int e^{i(x-y)\cdot\xi} a(y_d^\alpha\xi',y_d\xi_d)d\xi.
    \end{align*}
     Let $\chi$ be a cutoff near $\xi=0$, then $(1-\chi(\xi))a(y_d^\alpha \xi',y_d\xi_d)$ is a symbol in the sense that
     \begin{align*}
         |\partial_y^\beta\partial_\xi^\gamma (1-\chi(\xi))a(y_d^\alpha \xi',y_d\xi_d)|\lesssim_{\beta,\gamma} \langle \xi\rangle^{-1-\gamma|\xi|}.
     \end{align*}
     Thus the pseudodifferential operator maps from $H^{s-1}$ to $H^s$. Moreover, the rest part $\int  e^{i(x-y)\cdot\xi}\chi(\xi)a(y_d^\alpha\xi',y_d\xi_d)d\xi\in C^\infty$ has smooth Schwartz kernel, thus gives a smoothing operator. So we conclude \eqref{degsol_bound_near} from the discussion above.
    
\item In the faraway region, we can estimate directly.
\begin{align*}
    \|\tilde{K}_2u\|_{H^{s,\delta}_{\alpha}(\Omega)}^2&=\sum\limits_{|\beta|\leq s}  \|\langle x\rangle^{|\beta'|\alpha+\beta_d+\delta} \partial_{x'}^{\beta'}\partial_{x_d}^{\beta_d} (\tilde{K}_2u)\|_{L^2}^2\\
    &\lesssim \sum\limits_{|\beta|\leq s}\sum\limits_{j'}2^{2j'(|\beta'|\alpha+\beta_d+\delta-\alpha(d-1)-|\beta'|\alpha-\beta_d)+((d-1)\alpha+1)j'} \left(\sum\limits_{j'>j} 2^{((d-1)\alpha+1)j/2}\| \psi_j u\|_{L^2}\right)^2\\
    &\lesssim \sum\limits_{j'}2^{j'(2\delta-(d-1)\alpha+1)}\sum\limits_{j<j'}2^{-2j(\delta+1)}2^{\gamma(j-j')}\sum\limits_{j<j'}2^{\gamma(j'-j)}2^{2j(\delta+1)}2^{((d-1)\alpha+1)j}\|\psi_j u\|_{L^2}^2\\
    &\lesssim \sum\limits_j 2^{2j(\delta+1)}\|\psi_j u\|_{L^2}^2 \\
    &\lesssim \|\langle x\rangle^{\delta+1} u\|_{L^2}^2
\end{align*}
where we take $2\delta+2<\gamma<(d-1)\alpha+1$ which is possible since $\delta<\frac{(d-1)\alpha-1}{2}$.\qedhere
\end{itemize}

\end{proof}
From the solution operator for the divergence equation, we also get the solution operator for the double divergence equation.
\begin{prop}
Suppose $\delta<\frac{\alpha(d-1)-3}{2}$, then there is a solution operator $\tilde{S}:H^{s-2,\delta+2}_{\alpha}(\Omega)\to H^{s,\delta}_{\alpha}(\Omega)$ for the double divergence equation, i.e. $\partial_i\partial_j \tilde{S}^{ij} f=f$ and $\tilde{S}^{ij}$ is symmetric. Moreover, the integration kernel $K(x,y)$ of $\tilde{S}$ satisfies 
\begin{align*}
     |\partial_{x'}^{\beta'}\partial_{x_d}^{\beta_d} K(x,y)|\lesssim \langle x\rangle^{1-\alpha(d-1)-|\beta'|\alpha-\beta_d},\quad x_d>2y_d.
\end{align*}
\end{prop}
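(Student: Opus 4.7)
The plan is to iterate the divergence solution operator $\tilde{S}_0$ from Lemma \ref{lem:deg_div}: if $\tilde{S}_0$ inverts one divergence, applying it twice should invert the double divergence, yielding a 2-tensor-valued operator which I then symmetrize in the tensor indices.

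Concretely, given $f \in H^{s-2,\delta+2}_{\alpha}(\Omega)$, I first set $v^i := \tilde{S}_0^i f$, which lies in $H^{s-1,\delta+1}_{\alpha}(\Omega)$ by Lemma \ref{lem:deg_div}; this application is admissible because the hypothesis $\delta < \frac{\alpha(d-1)-3}{2}$ is exactly $(\delta+1) < \frac{\alpha(d-1)-1}{2}$. By construction $\partial_i v^i = f$. For each $i$, I then apply $\tilde{S}_0$ to the scalar $v^i$ and define $h^{ij}(x) := (\tilde{S}_0 v^i)^j(x)$, so that $\partial_j h^{ij} = v^i$ and $\partial_i \partial_j h^{ij} = f$. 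A second application of Lemma \ref{lem:deg_div}, whose weight condition is now the weaker inequality $\delta < \frac{\alpha(d-1)-1}{2}$, places $h \in H^{s,\delta}_{\alpha}(\Omega)$. Finally, I symmetrize $\tilde{S}^{ij} f := \tfrac{1}{2}(h^{ij} + h^{ji})$; since $\partial_i \partial_j$ is symmetric in $(i,j)$, the identity $\partial_i\partial_j \tilde{S}^{ij} f = f$ is preserved. Support in $\overline{\Omega}$ is maintained at each step because $\tilde{S}_0$ is built from outgoing flows $\gamma^{(1)}, \gamma^{(2)}$ that stay inside the degenerate sector.

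For the kernel estimate in the outgoing region $x_d > 2y_d$, the unsymmetrized kernel is $K^{ij}(x,y) = \int K_0^j(x,z)\, K_0^i(z,y)\, dz$, with $z$-support forced into $y_d < z_d < x_d$ by the outgoing character of $K_0$. I would split this integral at $z_d = x_d/2$: on the piece $z_d < x_d/2$ the factor $K_0^j(x,z)$ lives in the long-range tail $\tilde{K}_2$ with estimate $|\partial_{x'}^{\beta'}\partial_{x_d}^{\beta_d} \tilde{K}_2(x,z)| \lesssim (x_d - z_d)^{-\alpha(d-1)-|\beta'|\alpha-\beta_d} \lesssim \langle x\rangle^{-\alpha(d-1)-|\beta'|\alpha-\beta_d}$, while $K_0^i(z,y)$ contributes the bulk and the anisotropic volume of $\Omega$ at height $z_d$ is $\sim z_d^{\alpha(d-1)}\, dz_d$; on the piece $z_d > x_d/2$ the roles swap (and no $x$-derivatives land on the $y$-facing factor). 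In either case, after assigning the $x$-derivatives to the relevant factor, the $z_d$-integration over an interval of length $\sim x_d$ provides one extra power of $\langle x\rangle$, yielding the target bound $\langle x\rangle^{1-\alpha(d-1)-|\beta'|\alpha-\beta_d}$.

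The main obstacle is precisely this kernel estimate: one must carefully split the composition integral and simultaneously track the anisotropic volume factors and the interplay between the near-diagonal piece $\tilde{K}_1$ (which must be shown to contribute negligibly in the strictly outgoing region $x_d > 2y_d$) and the outgoing tail $\tilde{K}_2$ of each factor. A secondary but routine point is the convergence of the $z'$-integration across anisotropic shells; this is controlled by the explicit formulas for $K_0$ from Lemma \ref{lem:deg_div} together with the cutoff $\chi_1$, and does not require any new estimate beyond those already established there.
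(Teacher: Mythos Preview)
Your proposal is correct and follows exactly the paper's approach: apply $\tilde{S}_0$ twice and symmetrize, with the weight condition $\delta<\frac{\alpha(d-1)-3}{2}$ arising from the first application and the tail bound coming from the kernel estimates already established in Lemma~\ref{lem:deg_div}. Your discussion of the composed kernel estimate is more detailed than the paper's (which simply asserts that the bound ``follows from the construction''), but the underlying idea is the same.
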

\begin{proof}
We just need to apply $\tilde{S}_0$ twice and symmetrize it:
\begin{align*}
    \tilde{S}^{ij}f=\frac{1}{2}( \tilde{S}_0^i\tilde{S}_0^j f+\tilde{S}_0^j\tilde{S}_0^i f).
\end{align*}
The bound of the tail follows from the construction in Lemma \ref{lem:deg_div}.
\end{proof}

For the symmetric divergence equation, the construction is trickier. We use methods from \cite{reshetnyak1970estimates} and refer to \cite{div_eq2023} for further discussions.

\begin{prop}
    Suppose $\delta<\frac{\alpha(d+1)-3}{2}$, then there exists a solution operator $\Tilde{L}:H^{s-1,\delta+2-\alpha}_\alpha(\Omega)\to H^{s,\delta}_\alpha(\Omega)$ for the symmetric divergence equation, i.e. $\partial_i\tilde{L}^{ij}_k f_j= f_k$ and $\tilde{L}^{ij}$ is symmetric. Moreover, the integration kernel $K(x,y)$ of $\Tilde{L}$ satisfies 
    \begin{align*}
        |\partial_{x'}^{\beta'}\partial_{x_d}^{\beta_d} K(x,y)|\lesssim \langle x\rangle^{1-\alpha d-|\beta'|\alpha-\beta_d},\quad x_d>2y_d.
    \end{align*}
\end{prop}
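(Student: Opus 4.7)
The plan is to follow the same two-step template as in Lemma~\ref{lem:deg_div}, now replacing the elementary fundamental solution of the divergence equation with the Reshetnyak-type fundamental solution for the symmetric divergence equation. The model in the flat case is Step~2 of Theorem~\ref{thm:sol}: for directions $v,w\in\mathbb{S}^{d-1}$, the tensor
$$\pi^{ij}=\partial_l\phi\,(v^iv^lw^j+v^lv^jw^i-w^lv^iv^j)$$
solves $\partial_i\pi^{ij}=\delta_y w^j$ whenever $\phi$ is a one-dimensional second-order antiderivative along the ray through $y$ in direction $v$. Symmetry in $(i,j)$ is manifest, and the divergence collapses because $v^lw^j-w^lv^j$ is antisymmetric in $(j,l)$ and contracts trivially with the symmetric $\partial_j\partial_l\phi$. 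For a general curve $\gamma:[0,\infty)\to\RR^d$ with $\gamma(0)=y$ and non-constant tangent, I would build the curvilinear analogue $L_\gamma$ via Reshetnyak's construction in \cite{reshetnyak1970estimates}, which is discussed in detail in the forthcoming \cite{div_eq2023}.

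Given such a single-curve solution $L_\gamma$, I would then average over the same two families of curves $\gamma^{(1)}_{y,\omega}(t)=y+(\omega y_d^\alpha,y_d)t$ and $\gamma^{(2)}_{y,\omega}(t)$ used in Lemma~\ref{lem:deg_div}, weighted against a cutoff $\chi_1(\omega)$. The $\gamma^{(1)}$-average, localized by $\chi_2((x_d-y_d)/y_d)$, handles the nearby contribution; the commutator of $\Div$ with this cutoff is then cancelled by the $\gamma^{(2)}$-averaged operator, exactly as in the divergence case. The resulting kernel $K(x,y)$ is outgoing, symmetric in $(i,j)$, and direct differentiation of the explicit expressions for $\gamma^{(1)},\gamma^{(2)}$ together with their anisotropic homogeneities yields the claimed pointwise bound
$$|\partial_{x'}^{\beta'}\partial_{x_d}^{\beta_d}K(x,y)|\lesssim\langle x\rangle^{1-\alpha d-|\beta'|\alpha-\beta_d},\quad x_d>2y_d.$$
The exponent $1-\alpha d$, as opposed to $1-\alpha(d-1)$ in the divergence kernel, reflects the extra transverse derivative $\partial_l\phi$ built into the Reshetnyak tensor.

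The boundedness $\tilde L:H^{s-1,\delta+2-\alpha}_\alpha(\Omega)\to H^{s,\delta}_\alpha(\Omega)$ would then follow from the same diagonal/outgoing decomposition as in Lemma~\ref{lem:deg_div}. On the nearby part, the anisotropic rescaling $(x',x_d)\mapsto(R^\alpha x',Rx_d)$ reduces the question to an $H^{s-1}\to H^s$ bound on the unit annulus for an anisotropic pseudodifferential operator whose symbol is homogeneous of degree $-1$; tracking the scaling factors $y_d^\alpha,y_d$ reproduces the weight shift $2-\alpha$. On the faraway part, plugging the kernel bound into a Littlewood--Paley Schur estimate produces a geometric series which converges exactly when one can choose an auxiliary parameter $\gamma$ satisfying $2\delta+2(2-\alpha)-\gamma<0$ and $\gamma<(d-1)\alpha+1$, that is, precisely when $\delta<(\alpha(d+1)-3)/2$. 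A final symmetrization $\tilde L^{ij}\mapsto\tfrac12(\tilde L^{ij}+\tilde L^{ji})$ preserves all properties. The main obstacle is Step~1: the flat formula depends crucially on $v$ being constant along the ray, and for a curved $\gamma$ the tangent $\gamma'(t)$ generates commutator errors that must be absorbed by additional curvature-dependent tensors, which is exactly what Reshetnyak's construction provides and why its technical details are deferred to \cite{div_eq2023}.
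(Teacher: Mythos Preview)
Your outline matches the paper's proof in every structural respect: the same two curve families $\gamma^{(1)}_{y,\omega},\gamma^{(2)}_{y,\omega}$ from Lemma~\ref{lem:deg_div}, the same cutoff-plus-commutator splicing of the near and far pieces, the anisotropic rescaling to a unit-annulus order~$-1$ pseudodifferential operator for the near part, and the dyadic Schur sum for the far part. Your derivation of the threshold $\delta<\frac{\alpha(d+1)-3}{2}$ from the Schur condition is exactly right.

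The one substantive difference is the step you defer. The paper does not treat the single-curve fundamental solution $L_\gamma$ as a black box from \cite{reshetnyak1970estimates}; it writes it down via a short dual computation that works for \emph{any} curve $\gamma$ with $\gamma(0)=y$, with no ``commutator errors'' or curvature terms to absorb. Concretely: to express $\varphi_k(y)$ in terms of the symmetrized gradient $\zeta_{ij}=-\tfrac12(\partial_i\varphi_j+\partial_j\varphi_i)$, set $\eta_{ij}=\tfrac12(\partial_i\varphi_j-\partial_j\varphi_i)$ and observe the Saint-Venant-type identity $\partial_i\eta_{jk}=\partial_k\zeta_{ij}-\partial_j\zeta_{ik}$. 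Integrating this once along $\gamma$ recovers $\eta$ from $\zeta$, and integrating $\partial_i\varphi_j=\eta_{ij}-\zeta_{ij}$ once more recovers $\varphi_k(y)$. The resulting distribution $L^{ij}_k$ supported on $\gamma$ involves only $\gamma'(t)$ and $\gamma(t)-\gamma(0)$, and is already symmetric in $(i,j)$, so your final symmetrization step is unnecessary. Averaging over $\omega$ then gives explicit closed-form kernels $L_1,L_2$ (written out in the paper) from which the pointwise bound $\langle x\rangle^{1-\alpha d-|\beta'|\alpha-\beta_d}$ is read off directly. Note in particular that since $\gamma^{(1)}_{y,\omega}$ is a straight ray, your flat formula from Theorem~\ref{thm:sol} would in fact suffice for the nearby piece; it is only the genuinely curved $\gamma^{(2)}$ that requires the general formula above.
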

\begin{proof}
Fix a smooth curve $\gamma(t):[0,\infty)\to\RR^d$ such that $\gamma(0)=y$ and $\lim\limits_{t\to \infty}\gamma(t)=\infty$. We want to find $L\in\mathcal{D}'(\RR^d)$ such that 
\begin{align*}
    \varphi_k(y)=\langle \partial_{i} L^{ij}_{k},\varphi_j\rangle=-\frac{1}{2}\langle L^{ij}_k,\partial_i\varphi_j+\partial_j\varphi_i\rangle,\quad \varphi_j\in C_0^\infty(\RR^d).
\end{align*}
In order to recover $\varphi_k(y)$ from $\zeta_{ij}=-\frac{1}{2}(\partial_i\varphi_j+\partial_j\varphi_i)$, we let $\eta_{ij}=\frac{1}{2}(\partial_i\varphi_j-\partial_j\varphi_i)$ so that
\begin{align}\label{eq:div_eq_1}
    \partial_i\varphi_j=\eta_{ij}-\zeta_{ij}.
\end{align}
Since 
\begin{align*}
    \partial_{j}\zeta_{ik}=-\frac{1}{2}\partial_{ij}^2\varphi_k-\frac{1}{2}\partial_{jk}^2\varphi_i, \quad \partial_k\zeta_{ij}=-\frac{1}{2}\partial_{ik}^2\varphi_j-\frac{1}{2}\partial_{jk}^2\varphi_i,
\end{align*}
we have
\begin{align}\label{eq:div_eq_2}
    \partial_i\eta_{jk}=\frac{1}{2}(\partial^2_{ij}\varphi_k-\partial^2_{ik}\varphi_j)=\partial_k\zeta_{ij}-\partial_j\zeta_{ik}.
\end{align}
Now we can integrate \eqref{eq:div_eq_2} along $\gamma$ and get
\begin{align*}
    \eta_{jk}(\gamma(t))=-\int_t^\infty (\gamma'(s)^i\partial_k\zeta_{ij}(\gamma(s))-\gamma'(s)^i\partial_j\zeta_{ik}(\gamma(s))  )ds.
\end{align*}
Then we can integrate \eqref{eq:div_eq_1},
\begin{align*}
    \varphi_j(y)&=-\int_0^\infty ( \gamma'(t)^i\eta_{ij}(\gamma(t))-\gamma'(t)^i\zeta_{ij}(\gamma(t)))dt\\
    &=\int_0^\infty (\gamma'(s)^l\partial_j\zeta_{li}(\gamma(s))-\gamma'(s)^l\partial_i\zeta_{lj}(\gamma(s))  ) \left(\int_0^s \gamma'(t)^idt\right) ds +\int_0^\infty\gamma'(t)^i\zeta_{ij}(\gamma(t))dt\\
    &=\int_0^\infty (\gamma'(t)^l\partial_j\zeta_{li}(\gamma(t))-\gamma'(t)^l\partial_i\zeta_{lj}(\gamma(t))  )  (\gamma(t)^i-\gamma(0)^i) dt +\int_0^\infty\gamma'(t)^i\zeta_{ij}(\gamma(t))dt.
\end{align*}
So the fundamental solution $L^{i}$ supported on the curve $\gamma$ is given by
\begin{align*}
    \langle L^{ij}_k,\zeta_{ij}\rangle=\int_0^\infty (\gamma'(t)^j\partial_k\zeta_{ij}(\gamma(t))-\gamma'(t)^j\partial_i\zeta_{jk}(\gamma(t))  )  (\gamma(t)^i-\gamma(0)^i) dt +\int_0^\infty\gamma'(t)^i\zeta_{ik}(\gamma(t))dt.
\end{align*}
We then average along curves as in Lemma \ref{lem:deg_div}. For $\gamma^{(1)}_{y,\omega}=y+(\omega y_d^\alpha,y_d)t$, $\omega\in\RR^{d-1}$, $\chi_1\in C^\infty_0(\mathbb{R}^{d-1})$, we have
\begin{align*}
    L_1:&=\int_{\RR^{d-1}} \chi_1(\omega) L_{y,\omega}^{(1)}d\omega=-\partial_k\left(\chi_1\left(\frac{(x'-y')/y_d^\alpha}{(x_d-y_d)/y_d}\right)\frac{(x_j-y_j)(x_i-y_i)}{y_d^{\alpha(d-1)+1}|x_d-y_d|^d/y_d^d}\right)\\
    &+\frac{1}{2}\partial_l\left(\chi_1\left(\frac{(x'-y')/y_d^\alpha}{(x_d-y_d)/y_d}\right)\frac{(x_l-y_l)((x_j-y_j)\delta_{ik}+(x_i-y_i)\delta_{jk})}{y_d^{\alpha(d-1)+1}|x_d-y_d|^d/y_d^d}\right)\\
    &+\frac{1}{2}\chi_1\left(\frac{(x'-y')/y_d^\alpha}{(x_d-y_d)/y_d}\right)\frac{(x_i-y_i)\delta_{jk}+(x_j-y_j)\delta_{ik}}{y_d^{\alpha(d-1)+1}|x_d-y_d|^d/y_d^d}.
\end{align*}
For $\gamma_{y,\omega}^{(2)}(t)=(y'+\omega((1+t)^\alpha-1),y_d+t)$, we have
\begin{align*}
    L_2:&=\int_{\RR^{d-1}} \chi_1(\omega) L_{y,\omega}^{(2)}d\omega\\
    &=-\frac{1}{2}\partial_k\left(\chi_1\left(\frac{x'-y'}{(1+x_d-y_d)^\alpha-1}\right)((1+x_d-y_d)^\alpha-1)^{-(d-1)}\right.\\&\left(\alpha(x'-y') \frac{(1+x_d-y_d)^{\alpha-1}}{(1+x_d-y_d)^\alpha-1},1 \right)^j(x_i-y_i)+\left(\alpha(x'-y') \frac{(1+x_d-y_d)^{\alpha-1}}{(1+x_d-y_d)^\alpha-1},1 \right)^i(x_j-y_j)\Biggl)\\&
    +\frac{1}{2}\partial_l\Biggl(\chi_1\left(\frac{x'-y'}{(1+x_d-y_d)^\alpha-1}\right)(x_l-y_l)((1+x_d-y_d)^\alpha-1)^{-(d-1)}\\
    &\left(\left(\alpha(x'-y') \frac{(1+x_d-y_d)^{\alpha-1}}{(1+x_d-y_d)^\alpha-1},1 \right)^j\delta_{ik}+\left(\alpha(x'-y') \frac{(1+x_d-y_d)^{\alpha-1}}{(1+x_d-y_d)^\alpha-1},1 \right)^i\delta_{jk}\right)\Biggl)\\
    &+\frac{1}{2}\chi_1\left(\frac{x'-y'}{(1+x_d-y_d)^\alpha-1}\right)((1+x_d-y_d)^\alpha-1)^{-(d-1)}\\
    &\left(\left(\alpha(x'-y') \frac{(1+x_d-y_d)^{\alpha-1}}{(1+x_d-y_d)^\alpha-1},1 \right)^i\delta_{jk}+\left(\alpha(x'-y') \frac{(1+x_d-y_d)^{\alpha-1}}{(1+x_d-y_d)^\alpha-1},1 \right)^j\delta_{ik}\right).
\end{align*}
We then define the solution operator $\tilde{L}$ from $L_1$ and $L_2$ as in Lemma \ref{lem:deg_div} and it follows from the same proof that $\tilde{L}:H^{s-1,\delta+2-\alpha}_{\alpha}(\Omega)\to H^{s,\delta}_{\alpha}(\Omega)$
and
\begin{equation*}
    |\partial_{x'}^{\beta'}\partial_{x_d}^{\beta_d}K(x,y)|\lesssim \langle x\rangle^{1-\alpha -\alpha(d-1)-|\beta'|\alpha-\beta_d},\quad x_d>2y_d.\qedhere
\end{equation*}
\end{proof}

We can now give the proof of Theorem \ref{thm2}:
\begin{proof}[Proof of Theorem \ref{thm2}]
Let $d\geq 3, s>\frac{d}{2}+2$ be an integer. Let  $\frac{3}{d+1}<\alpha<1$ and $\frac{3-(d+3)\alpha}{2}<\delta<\frac{\alpha(d-1)-3}{2}$.
We now choose small but nontrivial $C^\infty_0$ solutions $(h_0,\pi_0)$ of the linearized equation $P(h_0,\pi_0)=0$, and solve the fixed point problem 
\begin{align*}
    (h,\pi)=(\tilde{S},\tilde{L})\Phi(h_0+h,\pi_0+\pi).
\end{align*} 
on the space $H^{s,\delta}_{\alpha}(\Omega)\times H^{s-1,\delta+\alpha}_\alpha(\Omega)$. Note $\delta<\frac{\alpha(d-1)-3}{2}$ ensures the solution operators $\tilde{S},\tilde{L}$ map to the correct spaces, and the other condition $\delta>\frac{3-(d+3)\alpha}{2}$ ensures the bilinear estimate 
$$\|\Phi(h_0+h,\pi_0+\pi)\|_{H^{s-2,\delta+2}_\alpha}\lesssim C(\|h_0+h\|_{H^{s,\delta}_\alpha})\|h_0+h\|_{H^{s,\delta}_\alpha}\|\pi_0+\pi\|_{H^{s-1,\delta+\alpha}_\alpha}.$$
By Banach fixed point theorem, we get a unique solution 
\begin{align*}
    (g^{ij}-\delta^{ij},k^{ij})\in H^{s,\delta}_{\alpha}(\Omega)\times H^{s-1,\delta+\alpha}_\alpha(\Omega).
\end{align*}

The smoothness and decay rate for $(h,\pi)$ is proved similarly as before.
\end{proof}

\printbibliography

\end{document}